\numberwithin{equation}{section}
\DeclareSymbolFont{SY}{U}{psy}{m}{n}
\DeclareMathSymbol{\emptyset}{\mathord}{SY}{'306}
\DeclareMathOperator{\Ran}{Ran}
\DeclareMathOperator{\Dom}{Dom}
\DeclareMathOperator{\spec}{spec}
\DeclareMathOperator{\dist}{dist}
\DeclareMathOperator{\conv}{conv}
\renewcommand{\Re}{\operatorname{Re}}
\DeclarePairedDelimiter{\abs}{\lvert}{\rvert}
\DeclarePairedDelimiter{\norm}{\lVert}{\rVert}
\DeclarePairedDelimiter{\tnorm}{\interleave}{\interleave}
\newcommand{\dd}{\mathrm d}
\newcommand{\ee}{\mathrm e}
\newcommand{\ii}{\mathrm i}
\newcommand{\N}{\mathbb{N}}
\newcommand{\R}{\mathbb{R}}
\newcommand{\EE}{\mathsf{E}}
\newcommand{\cH}{{\mathcal H}}
\newcommand{\cK}{{\mathcal K}}
\newcommand{\cL}{{\mathcal L}}
\newcommand{\cO}{{\mathcal O}}
\newcommand{\cU}{{\mathcal U}}
\newcommand{\fS}{\mathfrak{S}}
\newtheorem{introtheorem}{Theorem}{\bf}{\it}
\newtheorem{introcorollary}[introtheorem]{Corollary}{\bf}{\it}
\newtheorem{theorem}{Theorem}[section]{\bf}{\it}
\newtheorem{lemma}[theorem]{Lemma}{\bf}{\it}
\newtheorem{proposition}[theorem]{Proposition}{\bf}{\it}
\newtheorem{remark}[theorem]{Remark}{\it}{\rm}
{\bf}{\it}
\title[Notes on the $\sin 2\Theta$ theorem]{Notes on the $\sin 2\Theta$ theorem}
\subjclass[2010]{Primary 47A55; Secondary 47A15, 47A62, 47B15}
\keywords{Subspace perturbation problem, operator angle, maximal angle between closed subspaces, Sylvester equation, Riccati
equation, symmetrically-normed ideal}
\date{}
\author[A.\ Seelmann]{Albrecht Seelmann$^*$}
\address{A.~Seelmann, FB 08 - Institut f\"{u}r Mathematik,
Johannes Gutenberg-Universi\-t\"{a}t Mainz,
Staudinger Weg 9,
D-55099 Mainz,
Germany}
\email{seelmann@mathematik.uni-mainz.de}
\thanks{$^*$The material presented in this work will be part of the author's Ph.D. thesis.}
\begin{document}

%%%%%%%%%%%%%%%%%%%%%%%%%%%%%%%%%%%%%%%%%%%%%%%%%%%%%%%%%%%%%%%%%%%%%%%%%%%%%%%%%%%%%%%%%%%%%%%%%%%%%%%%%%%%%%%%%%%%%%%%%%%%%%%%%%%
%%% Abstract
%%%%%%%%%%%%%%%%%%%%%%%%%%%%%%%%%%%%%%%%%%%%%%%%%%%%%%%%%%%%%%%%%%%%%%%%%%%%%%%%%%%%%%%%%%%%%%%%%%%%%%%%%%%%%%%%%%%%%%%%%%%%%%%%%%%
\begin{abstract}
An analogue of the Davis-Kahan $\sin2\Theta$ theorem from [SIAM J.\ Numer.\ Anal.\ \textbf{7} (1970), 1--46] is proved under a
general spectral separation condition. This extends the generic $\sin2\theta$ estimates recently shown by Albeverio and Motovilov
in [Complex Anal.\ Oper.\ Theory \textbf{7} (2013), 1389--1416]. The result is applied to the subspace perturbation problem to
obtain a bound on the arcsine of the norm of the difference of the spectral projections associated with isolated components of the
spectrum of the unperturbed and perturbed operators, respectively.
\end{abstract}

\maketitle

%%%%%%%%%%%%%%%%%%%%%%%%%%%%%%%%%%%%%%%%%%%%%%%%%%%%%%%%%%%%%%%%%%%%%%%%%%%%%%%%%%%%%%%%%%%%%%%%%%%%%%%%%%%%%%%%%%%%%%%%%%%%%%%%%%%
%%%%%%%%%%%%%%%%%%%%%%%%%%%%%%%%%%%%%%%%%%%%%%%%%%%%%%%%%%%%%%%%%%%%%%%%%%%%%%%%%%%%%%%%%%%%%%%%%%%%%%%%%%%%%%%%%%%%%%%%%%%%%%%%%%%
%%% Introduction
%%%%%%%%%%%%%%%%%%%%%%%%%%%%%%%%%%%%%%%%%%%%%%%%%%%%%%%%%%%%%%%%%%%%%%%%%%%%%%%%%%%%%%%%%%%%%%%%%%%%%%%%%%%%%%%%%%%%%%%%%%%%%%%%%%%
%%%%%%%%%%%%%%%%%%%%%%%%%%%%%%%%%%%%%%%%%%%%%%%%%%%%%%%%%%%%%%%%%%%%%%%%%%%%%%%%%%%%%%%%%%%%%%%%%%%%%%%%%%%%%%%%%%%%%%%%%%%%%%%%%%%
\section{Introduction and main results}
The \emph{subspace perturbation problem} is a fundamental problem in operator perturbation theory that deals with the variation of
spectral subspaces for a self-adjoint or normal operator under a perturbation, see, e.g., \cite{BDM83} and \cite{KMM03}. Some of
the major contributions to this field of research have been made by Davis and Kahan \cite{DK70}. Extensions and generalizations of
their results have been considered in several works such as \cite{AM12,AM12:2,KMM04,KMM05,KMM07,MoSe06}.

The main objective in these studies is to bound trigonometric functions of the operator angle associated with spectral subspaces of
the unperturbed and perturbed operators, respectively.

Our main result is the following variant of the Davis-Kahan $\sin2\Theta$ theorem.
\begin{introtheorem}\label{thm:intro}
 Let $A$ be a self-adjoint operator on a separable Hilbert space $\cH$ such that the spectrum of $A$ is separated into two disjoint
 components, that is,
 \[
  \spec(A) = \sigma \cup \Sigma \quad\text{ with }\quad d:=\dist(\sigma,\Sigma)>0\,.
 \]
 Moreover, let $V$ be a bounded self-adjoint operator on $\cH$, and let $Q$ be an orthogonal projection in $\cH$ onto a reducing
 subspace for $A+V$. Then, the operator angle $\Theta=\Theta(\EE_A(\sigma),Q)$ associated with the subspaces $\Ran\EE_A(\sigma)$
 and $\Ran Q$ satisfies
 \begin{equation}\label{eq:introSin2Theta}
  \norm{\sin2\Theta} \le \frac{\pi}{2}\cdot 2\,\frac{\norm{V}}{d}\,.
 \end{equation}
 Here, $\EE_A(\sigma)$ denotes the spectral projection for $A$ associated with $\sigma$.
\end{introtheorem}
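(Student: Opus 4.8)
The plan is to reduce inequality~\eqref{eq:introSin2Theta} to a norm estimate for the solution of an operator Sylvester equation. Write $P:=\EE_A(\sigma)$, so that $[A,P]=0$ and $A$ decomposes as $A=A_\sigma\oplus A_\Sigma$ with respect to $\cH=\Ran P\oplus\Ran P^\perp$, where $A_\sigma,A_\Sigma$ are self-adjoint with $\spec(A_\sigma)=\sigma$, $\spec(A_\Sigma)=\Sigma$, and $\dist\bigl(\spec(A_\sigma),\spec(A_\Sigma)\bigr)=d$. Since $Q$ projects onto a reducing subspace for $B:=A+V$, we have $[B,Q]=0$ and hence $[A,Q]=-[V,Q]=[Q,V]$; this is the only point where the hypothesis on $Q$ enters.

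First I would rewrite the left-hand side of~\eqref{eq:introSin2Theta} in terms of $P$ and $Q$. From the defining relations $\cos^2\Theta=(PQP)|_{\Ran P}$ and $\sin^2\Theta=(PQ^\perp P)|_{\Ran P}$ for the operator angle, one computes, on $\Ran P$,
\[
 (PQP^\perp)(PQP^\perp)^* = PQP^\perp QP = PQP - (PQP)^2 = \cos^2\Theta\,\sin^2\Theta ,
\]
so that $\norm{PQP^\perp}=\norm{\sin\Theta\cos\Theta}=\tfrac12\,\norm{\sin2\Theta}$. Hence it suffices to prove $\norm{PQP^\perp}\le\tfrac{\pi}{2}\cdot\tfrac{\norm V}{d}$.

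Next, set $X:=PQP^\perp$, viewed as a bounded operator from $\Ran P^\perp$ to $\Ran P$. Using $PA=AP$ one obtains, in the appropriate (possibly weak) sense,
\[
 A_\sigma X - X A_\Sigma = PA\,QP^\perp - PQ\,AP^\perp = P\,[A,Q]\,P^\perp = P\,[Q,V]\,P^\perp ,
\]
so that $X$ solves a Sylvester equation whose coefficients are self-adjoint operators with spectra at distance $d$. Invoking the well-known sharp norm bound for such equations, any solution satisfies $\norm X\le\tfrac{\pi}{2d}\,\norm{P[Q,V]P^\perp}$. Finally, with respect to $Q$ the commutator $[Q,V]=QVQ^\perp-Q^\perp VQ$ is block off-diagonal, whence $\norm{[Q,V]}=\norm{QVQ^\perp}\le\norm V$ and, a fortiori, $\norm{P[Q,V]P^\perp}\le\norm V$. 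Combining, $\norm X\le\tfrac{\pi}{2d}\,\norm V$, and therefore $\norm{\sin2\Theta}=2\norm X\le\tfrac{\pi}{2}\cdot 2\,\tfrac{\norm V}{d}$, which is~\eqref{eq:introSin2Theta}.

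The delicate part is the treatment of the Sylvester equation when $\sigma$ or $\Sigma$ is unbounded: then $A_\sigma$ or $A_\Sigma$ is an unbounded self-adjoint operator, and one needs both a rigorous derivation of the identity $A_\sigma X-XA_\Sigma=P[Q,V]P^\perp$ (to be read weakly, or on a suitable core, exploiting that the right-hand side is bounded) and the validity of the norm estimate with the sharp constant $\pi/2$ for such coefficients; extending the Rosenblum-type solution theory to this generality is where the real work lies, everything else being bookkeeping with the two orthogonal projections $P$ and $Q$.
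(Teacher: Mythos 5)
Your proposal is correct, but it takes a genuinely different route from the paper. The paper's proof uses the Davis--Kahan reflection trick: with $K=Q-Q^\perp$ one has $K(A+V)K=A+V$, so $D:=KAK=A+V-KVK$ is self-adjoint with $\spec(D)=\spec(A)$, and the symmetric $\sin\Theta$ theorem (Proposition \ref{prop:symmSinTheta}) applied to the pair $(A,D)$ combined with the identity $\sin\bigl(2\Theta(P,Q)\bigr)=\sin\bigl(\Theta(P,KPK)\bigr)$ from Lemma \ref{lem:sin2ThetasinTheta} yields \eqref{eq:introSin2Theta}, the factor $2$ coming from $\norm{V-KVK}\le 2\norm{V}$. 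You instead note that $\sin^2 2\Theta=4\bigl(XX^*\oplus X^*X\bigr)$ with $X:=PQP^\perp$, $P=\EE_A(\sigma)$, hence $\norm{\sin 2\Theta}=2\norm{X}$ (your computation treats only the block on $\Ran P$ via the restricted definition of $\Theta$, but the block on $\Ran P^\perp$ has the same norm, so the identity also holds for the paper's unrestricted $\Theta$), and you bound $X$ as the solution of the Sylvester equation $A_\sigma X-XA_\Sigma=P[Q,V]P^\perp$ with $\norm{P[Q,V]P^\perp}\le\norm{QVQ^\perp}\le\norm{V}$, the factor $2$ now coming from the trigonometric identity. The ``real work'' you defer is not actually new: the solvability and the sharp $\tfrac{\pi}{2}$-bound for the Sylvester equation with unbounded self-adjoint coefficients is precisely Theorem \ref{thm:Sylvester}, quoted by the paper from the literature (Albeverio--Makarov--Motovilov, Bhatia--Davis--McIntosh, the constant going back to Sz.-Nagy); and checking that $X=PQP^\perp$ is a strong solution is routine from the domain splitting \eqref{eq:domSplitting}: for $g\in\Dom(A)\cap\Ran P^\perp$ one has $Qg\in\Dom(A)$, hence $PQg\in\Dom(A)\cap\Ran P$, and $A_\sigma Xg-XA_\Sigma g=P(AQ-QA)g=P[Q,V]g$, exactly parallel to the paper's verification for $PQ^\perp$ in the proof of Proposition \ref{prop:symmSinThetasnIdeals}. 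As to what each approach buys: yours is shorter and even more direct than the paper's alternative argument in Section \ref{sec:sin2theta} (no graph operator, no Riccati equation), and it delivers the full operator bound on $\sin 2\Theta$ rather than only the maximal-angle estimate; the paper's route, on the other hand, transfers verbatim to symmetrically-normed ideals (Theorem \ref{thm:sin2ThetasnIdeals}), because Lemma \ref{lem:sin2ThetasinTheta} is an operator identity, whereas your reduction $\norm{\sin2\Theta}=2\norm{PQP^\perp}$ exploits that the operator norm of a block-diagonal operator is the maximum of the block norms, which fails for general symmetric norms (one would lose an extra factor of $2$), so your method does not immediately yield Theorem \ref{thm:sin2ThetasnIdeals}.
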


For a definition of the operator angle associated with two closed subspaces of a Hilbert space, a self-adjoint operator whose
spectrum lies in the interval $[0,\frac{\pi}{2}]$, see Section \ref{sec:sin2Theta} below; see also \cite{KMM03:2}.

It should be emphasized that the projection $Q$ in Theorem \ref{thm:intro} is not assumed to be a spectral projection for $A+V$ and
that the operator $A$ is allowed to be unbounded. It is also worth mentioning that the bound \eqref{eq:introSin2Theta} is of an
\emph{a priori} type since the spectral separation condition is imposed on the unperturbed operator $A$ only. By switching the
roles of $A$ and $A+V$, one may impose the analogous condition on the perturbed operator $A+V$ instead, which results in the
corresponding \emph{a posteriori} type estimate.

Theorem \ref{thm:intro} is a direct analogue of the Davis-Kahan $\sin2\Theta$ theorem from \cite{DK70}. There, it is additionally
assumed that the convex hull of one of the spectral components $\sigma$ and $\Sigma$ is disjoint from the other one, that is,
$\conv(\sigma)\cap\Sigma=\emptyset$ or vice versa. The corresponding estimate is the same as \eqref{eq:introSin2Theta}, except for
the constant $\frac{\pi}{2}$ being replaced by $1$. Note that the Davis-Kahan $\sin2\Theta$ theorem is formulated in \cite{DK70}
for arbitrary unitary-invariant norms including the standard Schatten norms. A corresponding extension of Theorem \ref{thm:intro}
is discussed in Section \ref{sec:snIdeals} below.

An immediate consequence of Theorem \ref{thm:intro} is the \emph{generic $\sin2\theta$ estimate} recently proved by Albeverio and
Motovilov in \cite{AM12:2},
\begin{equation}\label{eq:introSin2theta}
 \sin2\theta \le \frac{\pi}{2}\cdot 2\,\frac{\norm{V}}{d}\quad\text{ with }\quad
 \theta:=\norm{\Theta}=\arcsin\bigl(\norm{\EE_A(\sigma)-Q}\bigr)\,.
\end{equation}
This is due to the elementary inequality $\sin(2\norm{\Theta})\le\norm{\sin2\Theta}$; for the second representation of $\theta$ in
\eqref{eq:introSin2theta}, see equation \eqref{eq:projDiffNorm} below. In this respect, we may call \eqref{eq:introSin2Theta} the
\emph{generic $\sin2\Theta$ estimate}. It should be emphasized that, in contrast to \eqref{eq:introSin2Theta}, no extension of
\eqref{eq:introSin2theta} to norms other than the usual bound norm is at hand.

Clearly, the estimates \eqref{eq:introSin2Theta} and \eqref{eq:introSin2theta} provide no useful information if
$\norm{V}\ge\frac{d}{\pi}$. On the other hand, for perturbations $V$ satisfying $\norm{V}<\frac{d}{\pi}$, the $\sin2\Theta$
estimate \eqref{eq:introSin2Theta} implies that $\norm{\sin2\Theta}<1$, so that the spectrum of $\Theta$ has a gap around
$\frac{\pi}{4}$. This means that there is an open interval containing $\frac{\pi}{4}$ that belongs to the resolvent set of
$\Theta$, namely
\[
 \Bigl(\alpha, \frac{\pi}{2}-\alpha\Bigr) \subset\Bigl[0,\frac{\pi}{2}\Bigr]\setminus\spec(\Theta)\quad\text{ with }\quad
 \alpha:=\frac{1}{2}\arcsin\Bigl(\frac{\pi}{2}\cdot2\,\frac{\norm{V}}{d}\Bigr)<\frac{\pi}{4}\,.
\]
Note that $\Theta$ may a priori have spectrum both in $[0,\alpha]$ and $\bigl[\frac{\pi}{2}-\alpha,\frac{\pi}{2}\bigr]$. This
depends on the reducing subspace for $A+V$ that is considered, see Remark \ref{rem:opAngle} below.

In this regard, Theorem \ref{thm:intro} is in general stronger than the corresponding result of the $\sin2\theta$ estimate
\eqref{eq:introSin2theta} since the latter provides information only on the \emph{maximal angle} $\theta=\norm{\Theta}$ between the
subspaces $\Ran\EE_A(\sigma)$ and $\Ran Q$, cf.\ \cite[Remark 4.2]{AM12:2}. However, if it is known that $\theta\le\frac{\pi}{4}$,
then $\norm{\sin2\Theta}=\sin2\theta$, so that, in this case, both estimates agree.

As an application to the subspace perturbation problem, we obtain the following bound on the maximal angle between the
corresponding spectral subspaces for the unperturbed and perturbed operators $A$ and $A+V$, respectively.
\begin{introcorollary}[cf.\ {\cite[Remark 4.4]{AM12:2}}]\label{cor:intro}
 Let $A$ and $V$ be as in Theorem \ref{thm:intro}. If $\norm{V}\le \frac{d}{\pi}$, then
 \begin{equation}\label{eq:introMaxAngleBound}
  \arcsin\bigl(\norm{\EE_A(\sigma) - \EE_{A+V}\bigl(\cO_{d/2}(\sigma)\bigr)}\bigr) \le
  \frac{1}{2}\arcsin\Bigl(\frac{\pi}{2}\cdot 2\,\frac{\norm{V}}{d}\Bigr) \le \frac{\pi}{4}\,,
 \end{equation}
 where $\EE_{A+V}\bigl(\cO_{d/2}(\sigma)\bigr)$ denotes the spectral projection for $A+V$ associated with the open
 $\frac{d}{2}$-neighbourhood $\cO_{d/2}(\sigma)$ of $\sigma$.
\end{introcorollary}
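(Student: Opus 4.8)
The plan is to apply Theorem~\ref{thm:intro} with the particular choice $Q:=\EE_{A+V}\bigl(\cO_{d/2}(\sigma)\bigr)$, and then to convert the resulting $\sin2\Theta$ estimate into the claimed bound on $\norm{\Theta}$. Since $\norm{V}\le d/\pi<d/2$, the standard spectral inclusion $\spec(A+V)\subseteq\cO_{\norm{V}}(\sigma)\cup\cO_{\norm{V}}(\Sigma)$ holds with the two neighbourhoods disjoint, and $\cO_{d/2}(\sigma)$ contains $\cO_{\norm{V}}(\sigma)$ while staying at distance at least $d-2\norm{V}>0$ from $\cO_{\norm{V}}(\Sigma)$. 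Hence $\cO_{d/2}(\sigma)$ isolates a component of $\spec(A+V)$, so $Q$ is an orthogonal projection onto a reducing subspace for $A+V$, and Theorem~\ref{thm:intro} gives $\norm{\sin2\Theta}\le\frac{\pi}{2}\cdot2\,\frac{\norm{V}}{d}\le1$ for $\Theta=\Theta(\EE_A(\sigma),Q)$. Setting $\theta:=\norm{\Theta}$ and using the elementary inequality $\sin2\theta\le\norm{\sin2\Theta}$ together with equation~\eqref{eq:projDiffNorm}, this yields $\sin\bigl(2\arcsin(\norm{\EE_A(\sigma)-Q})\bigr)=\sin2\theta\le\frac{\pi}{2}\cdot2\,\frac{\norm{V}}{d}$.

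To obtain~\eqref{eq:introMaxAngleBound} from this it suffices to show that $\theta\le\frac{\pi}{4}$: then $2\theta\in[0,\frac{\pi}{2}]$, so $2\theta=\arcsin(\sin2\theta)\le\arcsin\bigl(\frac{\pi}{2}\cdot2\,\frac{\norm{V}}{d}\bigr)$ by monotonicity of $\arcsin$, which is the first inequality in~\eqref{eq:introMaxAngleBound}; the second is immediate since $\frac{\pi}{2}\cdot2\,\frac{\norm{V}}{d}\le1$. I would prove $\theta\le\frac{\pi}{4}$ by a homotopy argument along $A+tV$, $t\in[0,1]$. Put $Q_t:=\EE_{A+tV}\bigl(\cO_{d/2}(\sigma)\bigr)$ and $\theta_t:=\arcsin(\norm{\EE_A(\sigma)-Q_t})$. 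As $t\norm{V}\le\norm{V}<d/2$ for all $t$, the reasoning above shows that $\cO_{d/2}(\sigma)$ always isolates a component of $\spec(A+tV)$ with a gap of size at least $d-2\norm{V}>0$ to the remainder of the spectrum; representing $Q_t=\phi(A+tV)$ for a fixed $\phi\in C^\infty(\R)$ with bounded derivatives that equals $1$ on $\cO_{\norm{V}}(\sigma)$ and $0$ on $\cO_{\norm{V}}(\Sigma)$, and using that such $\phi$ act Lipschitz-continuously under bounded self-adjoint perturbations, one sees that $t\mapsto Q_t$, and hence $t\mapsto\theta_t$, is norm-continuous on $[0,1]$. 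At $t=0$ one has $\cO_{d/2}(\sigma)\cap\spec(A)=\sigma$, so $Q_0=\EE_A(\sigma)$ and $\theta_0=0$.

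For $t\in[0,1)$, Theorem~\ref{thm:intro} applied to the perturbation $tV$ and the reducing projection $Q_t$ yields $\sin2\theta_t\le\norm{\sin2\Theta(\EE_A(\sigma),Q_t)}\le\frac{\pi}{2}\cdot2\,\frac{t\norm{V}}{d}\le t<1$, so $\theta_t\ne\frac{\pi}{4}$. Since $t\mapsto\theta_t$ is continuous on the connected set $[0,1)$ with $\theta_0=0$, it follows that $\theta_t<\frac{\pi}{4}$ for every $t\in[0,1)$, and letting $t\to1$ gives $\theta=\theta_1\le\frac{\pi}{4}$, which completes the argument. I expect the main technical point to be the justification of the norm-continuity of $t\mapsto Q_t$ in the case where $\sigma$ (and thus $\cO_{d/2}(\sigma)$) is unbounded, since $A$ is allowed to be unbounded; everything else reduces to Theorem~\ref{thm:intro} and elementary monotonicity properties of $\sin$ and $\arcsin$.
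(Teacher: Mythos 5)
Your argument is correct and is essentially the paper's own proof: the paper's Lemma \ref{lem:cor} runs exactly your homotopy $t\mapsto\EE_{A+tV}\bigl(\cO_{d/2}(\sigma)\bigr)$, applies the bound \eqref{eq:maxAngleBound} from Theorem \ref{thm:intro} at each $t$, and uses norm continuity plus an intermediate-value argument to exclude $\theta>\frac{\pi}{4}$. The only point where you diverge is the justification of the norm continuity of $t\mapsto Q_t$: you invoke operator-Lipschitz bounds for a smooth cutoff $\phi$ with $Q_t=\phi(A+tV)$, which is correct in substance but rests on a nontrivial external fact about operator Lipschitz functions (note also that $\sigma$ and $\Sigma$ may interlace, so $\phi$ has infinitely many transition regions and one needs uniformity), whereas the paper simply cites \cite[Theorem 3.5]{AM12:2}, which follows from the symmetric $\sin\Theta$ theorem (Proposition \ref{prop:symmSinTheta}) and gives directly the Lipschitz bound $\norm{Q_t-Q_s}\le\frac{\pi}{2}\,\abs{t-s}\,\norm{V}/(d-2\norm{V})$, a tool already available in the paper and better adapted to unbounded $\sigma$.
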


The bound \eqref{eq:introMaxAngleBound} in Corollary \ref{cor:intro} is not optimal if $\norm{V}>\frac{4}{\pi^2+4}\,d$, see
\cite{AM12:2}. However, for perturbations $V$ satisfying $\norm{V}\le\frac{4}{\pi^2+4}\,d$, this bound on the maximal angle is the
strongest one available so far, cf.\ \cite[Remark 5.5]{AM12:2}.

The paper is organized as follows:
Section \ref{sec:sin2Theta} forms the main part of this work. There, we recall the notion of the operator angle associated with
two closed subspaces of a Hilbert space and give the proofs of Theorem \ref{thm:intro} and Corollary \ref{cor:intro}. We also
discuss the statement of Corollary \ref{cor:intro} in the situation of the original Davis-Kahan $\sin2\Theta$ theorem, see Remark
\ref{rem:sin2Theta}.

Section \ref{sec:sin2theta} is devoted to an alternative, straightforward proof of the $\sin2\theta$ estimate
\eqref{eq:introSin2theta} that is not based on Theorem \ref{thm:intro} and is more direct than the one by Albeverio and Motovilov
in \cite{AM12:2}.

Finally, in Section \ref{sec:snIdeals} we extend Theorem \ref{thm:intro} to symmetrically-normed ideals such as the standard
Schatten classes, see Theorem \ref{thm:sin2ThetasnIdeals}. We also discuss the more general case of normal operators $A$ in this
section, see Remark \ref{rem:sin2ThetasnIdeals}.

Throughout this paper we use the following notations:

The space of bounded linear operators from a Hilbert space $\cH$ to a Hilbert space $\cK$ is denoted by $\cL(\cH,\cK)$, and
$\norm{\cdot}$ stands for the usual bound norm on $\cL(\cH,\cK)$. If $\cH=\cK$, we simply write $\cL(\cH):=\cL(\cH,\cH)$. The
identity operator on $\cH$ is denoted by $I_{\cH}$. For an orthogonal projection $P$ in $\cH$ we write $P^\perp:=I_{\cH}-P$.

Given a linear operator $B$ on a Hilbert space, its domain and its range are written as $\Dom(B)$ and $\Ran(B)$, respectively.
Finally, if $B$ is self-adjoint, $\EE_B(\Delta)$ denotes the spectral projection for $B$ associated with a Borel set
$\Delta\subset\R$.

%%%%%%%%%%%%%%%%%%%%%%%%%%%%%%%%%%%%%%%%%%%%%%%%%%%%%%%%%%%%%%%%%%%%%%%%%%%%%%%%%%%%%%%%%%%%%%%%%%%%%%%%%%%%%%%%%%%%%%%%%%%%%%%%%%%
%%%%%%%%%%%%%%%%%%%%%%%%%%%%%%%%%%%%%%%%%%%%%%%%%%%%%%%%%%%%%%%%%%%%%%%%%%%%%%%%%%%%%%%%%%%%%%%%%%%%%%%%%%%%%%%%%%%%%%%%%%%%%%%%%%%
%%% Proofs
%%%%%%%%%%%%%%%%%%%%%%%%%%%%%%%%%%%%%%%%%%%%%%%%%%%%%%%%%%%%%%%%%%%%%%%%%%%%%%%%%%%%%%%%%%%%%%%%%%%%%%%%%%%%%%%%%%%%%%%%%%%%%%%%%%%
%%%%%%%%%%%%%%%%%%%%%%%%%%%%%%%%%%%%%%%%%%%%%%%%%%%%%%%%%%%%%%%%%%%%%%%%%%%%%%%%%%%%%%%%%%%%%%%%%%%%%%%%%%%%%%%%%%%%%%%%%%%%%%%%%%%
\section{Proof of Theorem \ref{thm:intro} and Corollary \ref{cor:intro}}\label{sec:sin2Theta}

%%%%%%%%%%%%%%%%%%%%%%%%%%%%%%%%%%%%%%%%%%%%%%%%%%%%%%%%%%%%%%%%%%%%%%%%%%%%%%%%%%%%%%%%%%%%%%%%%%%%%%%%%%%%%%%%%%%%%%%%%%%%%%%%%%%
%%% Separation of subspaces
%%%%%%%%%%%%%%%%%%%%%%%%%%%%%%%%%%%%%%%%%%%%%%%%%%%%%%%%%%%%%%%%%%%%%%%%%%%%%%%%%%%%%%%%%%%%%%%%%%%%%%%%%%%%%%%%%%%%%%%%%%%%%%%%%%%
We start with briefly recalling some well-known facts on the separation of closed subspaces. For a more detailed discussion on this
material, see \cite{Davis58} and \cite[Section 3]{DK70}; see also \cite{AM12:2}, \cite{KMM03:2}, \cite{MoSe06}, and references
therein.

Let $P$ and $Q$ be two orthogonal projections in a Hilbert space $\cH$. Following \cite{Davis58}, we introduce the \emph{closeness
operator}
\[
 C := C(P,Q) := PQP + P^\perp Q^\perp P^\perp
\]
and the \emph{separation operator}
\[
 S := S(P,Q) := PQ^\perp P + P^\perp QP^\perp\,.
\]
Since $P$ and $Q$ are self-adjoint, $C$ and $S$ are self-adjoint as well. Moreover, one has
\begin{equation}\label{eq:CS}
 0 \le C \le I_{\cH}\,,\quad  0 \le S \le I_{\cH}\,, \quad\text{ and }\quad C + S = I_{\cH}\,.
\end{equation}

The \emph{operator angle} associated with the subspaces $\Ran P$ and $\Ran Q$ can now be introduced via the functional calculus as
\begin{equation}\label{eq:defOpAngle}
 \Theta := \Theta(P,Q) := \arccos\bigl( \sqrt{C(P,Q)}\, \bigr)\,.
\end{equation}
Clearly, $\Theta$ is self-adjoint and its spectrum lies in the interval $\bigl[0,\frac{\pi}{2}\bigr]$. Furthermore, taking into
account \eqref{eq:CS} and \eqref{eq:defOpAngle}, the operators $C$ and $S$ can be represented as
\begin{equation}\label{eq:CSTheta}
 C=\cos^2\Theta \quad\text{ and }\quad S=\sin^2\Theta\,.
\end{equation}

It should be mentioned that in many works such as \cite{KMM03:2} and \cite{KMM05} the operator angle is introduced in a slightly
different way. There, instead of $\Theta$ in \eqref{eq:defOpAngle}, its restriction to $\Ran P$, or even to the maximal subspace of
$\Ran P$ where it has trivial kernel, is considered. The above definition \eqref{eq:defOpAngle} follows the approach by Davis and
Kahan (cf.\ \cite[Eqs.\ (1.16) and (1.17)]{DK70}; see also \cite[p.\ 17]{DK70}) and provides a generalization of their notion of
the operator angle, see Remark \ref{rem:opAngle} below.

As in \cite[Section 34]{AG93}, one has
\begin{equation}\label{eq:projDiff}
 P-Q=P(I_{\cH}-Q)-(I_{\cH}-P)Q = PQ^\perp - P^\perp Q = Q^\perp P - QP^\perp\,,
\end{equation}
so that
\[
 \begin{aligned}
  (P-Q)^2 &= \bigl(PQ^\perp - P^\perp Q\bigr)\bigl(Q^\perp P - QP^\perp\bigr)\\
  &= PQ^\perp P + P^\perp QP^\perp = S = \sin^2\Theta\,,
 \end{aligned}
\]
that is,
\begin{equation}\label{eq:sinTheta}
 \abs{P-Q} = \sin\Theta\,.
\end{equation}
In particular,
\begin{equation}\label{eq:projDiffNorm}
 \norm{P-Q}=\norm{\sin\Theta}=\sin\norm{\Theta}\le1\,.
\end{equation}
Thus, suitable norms of the operator angle $\Theta$ or of trigonometric functions thereof can be used to measure the difference
between the subspaces $\Ran P$ and $\Ran Q$.

\begin{remark}\label{rem:dirRot}
 If the subspaces $\Ran P$ and $\Ran Q$ are \emph{equivalently positioned} (cf.\ \cite{Davis58}), that is,
 \[
  \dim \Ran P\cap \Ran Q^\perp = \dim \Ran P^\perp \cap \Ran Q\,,
 \]
 then there exists a unitary operator $U\in\cL(\cH)$ such that
 \[
  QU=UP\,,\quad U^2 = (Q-Q^\perp)(P-P^\perp)\,, \quad\text{ and }\quad \Re U\ge 0\,,
 \]
 where $\Re U=\frac{1}{2}(U+U^*)$ denotes the real part of $U$, see, e.g., \cite[Propositions 3.2 and 3.3]{DK70}; such a unitary
 operator is called a \emph{direct rotation} from $\Ran P$ to $\Ran Q$. In this case, it is an elementary exercise to check that
 \[
  \Re U = \sqrt{C} = \cos\Theta\,,
 \]
 so that the operator angle $\Theta$ has indeed a natural interpretation as a rotation angle. Following \cite[Eq.\ (1.18)]{DK70},
 the operator $U$ can even be represented as a $2\times 2$ block operator matrix that resembles a rotation matrix from the
 $2$-dimensional case.
 
 However, the definition \eqref{eq:defOpAngle} of the operator angle does not require that a direct rotation from $\Ran P$ to
 $\Ran Q$ exists. In fact, it is not even necessary that a unitary operator taking $\Ran P$ to $\Ran Q$ exists at all. In this
 respect, \eqref{eq:defOpAngle} generalizes the notion of the operator angle from \cite{DK70}, and it turns out to be very
 convenient to formulate our considerations in this generality.
\end{remark}

%%%%%%%%%%%%%%%%%%%%%%%%%%%%%%%%%%%%%%%%%%%%%%%%%%%%%%%%%%%%%%%%%%%%%%%%%%%%%%%%%%%%%%%%%%%%%%%%%%%%%%%%%%%%%%%%%%%%%%%%%%%%%%%%%%%
%%% Davis-Kahan: \sin2\Theta to \sin\Theta
%%%%%%%%%%%%%%%%%%%%%%%%%%%%%%%%%%%%%%%%%%%%%%%%%%%%%%%%%%%%%%%%%%%%%%%%%%%%%%%%%%%%%%%%%%%%%%%%%%%%%%%%%%%%%%%%%%%%%%%%%%%%%%%%%%%
The following result has already played a crucial role in the proof of the original Davis-Kahan $\sin2\Theta$ theorem in
\cite{DK70}, and it is one of the key ingredients for our proof of Theorem \ref{thm:intro} as well.
\begin{lemma}[cf.\ {\cite[Section 7]{DK70}}]\label{lem:sin2ThetasinTheta}
 Let $P$ and $Q$ be two orthogonal projections in a Hilbert space $\cH$, and denote $K := Q-Q^\perp$. Then
 \[
  \sin\bigl(2\Theta(P,Q)\bigr) = \sin\bigl(\Theta(P,KPK)\bigr)\,.
 \]
 \begin{proof}
  For the sake of completeness, we give a proof in the current notations.

  In view of \eqref{eq:CSTheta}, one computes
  \begin{equation}\label{eq:sin2ThetaSqr}
   \begin{aligned}
    \sin^2\bigl(2\Theta(P,Q)\bigr)
    &= 4S(P,Q)C(P,Q)\\
    &= 4 \bigl(PQ^\perp P + P^\perp QP^\perp\bigr) \bigl(PQP + P^\perp Q^\perp P^\perp\bigr)\\
    &= 4 PQ^\perp PQP + 4 P^\perp Q P^\perp Q^\perp P^\perp \,.
   \end{aligned}
  \end{equation}
 Denote $R:=KPK$. Clearly, $R$ is again an orthogonal projection in $\cH$ since $K$ is self-adjoint and unitary. Taking into
 account that $K=I_{\cH}-2Q^\perp=2Q-I_{\cH}$, one observes that
 \begin{equation}\label{eq:PQpPQP}
  \begin{aligned}
   4PQ^\perp PQP &= -4PQ^\perp P^\perp QP = P\bigl(I_{\cH}-2Q^\perp\bigr)P^\perp\bigl(2Q-I_{\cH}\bigr)P\\
   &= P KP^\perp K P = PR^\perp P
  \end{aligned}
 \end{equation}
 and, similarly, that
 \begin{equation}\label{eq:PpQPpQpPp}
  4P^\perp Q P^\perp Q^\perp P^\perp = P^\perp R P^\perp\,.
 \end{equation}
 Combining \eqref{eq:sin2ThetaSqr}--\eqref{eq:PpQPpQpPp} yields
 \[
  \sin^2\bigl(2\Theta(P,Q)\bigr) = PR^\perp P + P^\perp RP^\perp = S(P,R) = \sin^2\bigl(\Theta(P,R)\bigr)\,,
 \]
 which proves the claim by taking the square roots.
 \end{proof}%
\end{lemma}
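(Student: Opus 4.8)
The plan is to avoid direct rotations altogether — the identity should hold for \emph{any} two projections, whereas a direct rotation from $\Ran P$ to $\Ran Q$ exists only under an equivalent-positioning hypothesis (see Remark \ref{rem:dirRot}) — and to prove instead the stronger operator identity
\[
 \sin^2\bigl(2\Theta(P,Q)\bigr) = \sin^2\bigl(\Theta(P,R)\bigr)\,,\qquad R:=KPK\,,
\]
from which the claim follows by taking the (nonnegative) square roots: the spectra of $\Theta(P,Q)$ and $\Theta(P,R)$ lie in $[0,\frac{\pi}{2}]$, so $\sin 2\Theta(P,Q)\ge 0$ and $\sin\Theta(P,R)\ge 0$. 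First one notes that $R$ is again an orthogonal projection, since $K=Q-Q^\perp=2Q-I_{\cH}=I_{\cH}-2Q^\perp$ is self-adjoint and unitary; geometrically $K$ is the symmetry about $\Ran Q$, so $\Ran R$ is the mirror image of $\Ran P$ and the angle from $\Ran P$ to $\Ran R$ is twice the angle from $\Ran P$ to $\Ran Q$, folded back into $[0,\frac{\pi}{2}]$ — which on the level of sines is precisely the asserted equality.

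For the left-hand side I would apply the double-angle formula in the functional calculus of $\Theta(P,Q)$ together with \eqref{eq:CSTheta}, obtaining $\sin^2(2\Theta(P,Q))=4\,S(P,Q)\,C(P,Q)$, and then multiply out
\[
 4\bigl(PQ^\perp P+P^\perp QP^\perp\bigr)\bigl(PQP+P^\perp Q^\perp P^\perp\bigr)=4PQ^\perp PQP+4P^\perp QP^\perp Q^\perp P^\perp\,,
\]
the two mixed terms dropping out because $PP^\perp=0$. For the right-hand side, \eqref{eq:CSTheta} gives $\sin^2(\Theta(P,R))=S(P,R)=PR^\perp P+P^\perp RP^\perp$. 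Hence the lemma reduces to the two algebraic identities $4PQ^\perp PQP=PR^\perp P$ and $4P^\perp QP^\perp Q^\perp P^\perp=P^\perp RP^\perp$.

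These I would verify by passing to $K$. The key step for the first one is a sign insertion: since $Q^\perp Q=0$ and $P+P^\perp=I_{\cH}$,
\[
 PQ^\perp PQP = PQ^\perp(P+P^\perp)QP - PQ^\perp P^\perp QP = -PQ^\perp P^\perp QP\,,
\]
and now substituting $I_{\cH}-2Q^\perp=K=2Q-I_{\cH}$ turns $-4PQ^\perp P^\perp QP$ into $P(I_{\cH}-2Q^\perp)P^\perp(2Q-I_{\cH})P=PKP^\perp KP=P(KP^\perp K)P=PR^\perp P$, using $KP^\perp K=I_{\cH}-KPK=R^\perp$. The second identity follows in exactly the same manner, or by the symmetry $P\leftrightarrow P^\perp$, $Q\leftrightarrow Q^\perp$ (under which $R\leftrightarrow R^\perp$). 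Adding the two identities gives $\sin^2(2\Theta(P,Q))=PR^\perp P+P^\perp RP^\perp=S(P,R)=\sin^2(\Theta(P,R))$, and taking square roots concludes the proof. I do not anticipate a genuine obstacle; the only step that is not automatic is the sign insertion, without which the substitution $Q^\perp=\frac{1}{2}(I_{\cH}-K)$, $Q=\frac{1}{2}(I_{\cH}+K)$ fails to reassemble into the closeness and separation operators of the pair $(P,R)$.
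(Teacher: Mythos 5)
Your proposal is correct and follows essentially the same route as the paper: the double-angle identity $\sin^2(2\Theta(P,Q))=4S(P,Q)C(P,Q)$, the sign insertion $PQ^\perp PQP=-PQ^\perp P^\perp QP$, the substitution via $K=2Q-I_{\cH}=I_{\cH}-2Q^\perp$ to recognize $PR^\perp P$ and $P^\perp RP^\perp$, and finally taking nonnegative square roots. The only (harmless) additions are your explicit remarks that $KP^\perp K=R^\perp$ and that both operators are nonnegative, which the paper leaves implicit.
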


%%%%%%%%%%%%%%%%%%%%%%%%%%%%%%%%%%%%%%%%%%%%%%%%%%%%%%%%%%%%%%%%%%%%%%%%%%%%%%%%%%%%%%%%%%%%%%%%%%%%%%%%%%%%%%%%%%%%%%%%%%%%%%%%%%%
%%% Symmetric \sin\Theta theorem
%%%%%%%%%%%%%%%%%%%%%%%%%%%%%%%%%%%%%%%%%%%%%%%%%%%%%%%%%%%%%%%%%%%%%%%%%%%%%%%%%%%%%%%%%%%%%%%%%%%%%%%%%%%%%%%%%%%%%%%%%%%%%%%%%%%
The preceding Lemma \ref{lem:sin2ThetasinTheta} motivates to consider suitable bounds on $\sin\Theta$. In this respect, the
following proposition is essential for our considerations.

\begin{proposition}[The symmetric $\sin\Theta$ theorem]\label{prop:symmSinTheta}
 Let $A$ be a self-adjoint operator on a separable Hilbert space $\cH$, and let $V\in\cL(\cH)$ be self-adjoint. Write
 \[
  \spec(A) = \sigma \cup \Sigma \quad\text{ and }\quad \spec(A+V)=\omega\cup\Omega
 \]
 with $\sigma\cap\Sigma=\emptyset=\omega\cap\Omega$, and suppose that there is $d>0$ such that
 \[
  \dist(\sigma,\Omega) \ge d \quad\text{ and }\quad \dist(\Sigma,\omega) \ge d\,.
 \]
 Then, the operator angle $\Theta=\Theta(\EE_A(\sigma),\EE_{A+V}(\omega))$ satisfies the bound
 \[
  \norm{\sin\Theta} = \norm{\EE_A(\sigma)-\EE_{A+V}(\omega)} \le \frac{\pi}{2}\,\frac{\norm{V}}{d}\,.
 \]
\end{proposition}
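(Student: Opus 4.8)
The plan is to reduce the estimate on $\norm{\sin\Theta}=\norm{\EE_A(\sigma)-\EE_{A+V}(\omega)}$ to two Sylvester-equation bounds. Write $P:=\EE_A(\sigma)$ and $Q:=\EE_{A+V}(\omega)$, so that $P^\perp=\EE_A(\Sigma)$ and $Q^\perp=\EE_{A+V}(\Omega)$. By \eqref{eq:projDiff} one has $(P-Q)^2=PQ^\perp P+P^\perp QP^\perp$; since $PQ^\perp P$ vanishes on $\Ran P^\perp$ and $P^\perp QP^\perp$ vanishes on $\Ran P$, the operator $(P-Q)^2$ is block-diagonal with respect to $\cH=\Ran P\oplus\Ran P^\perp$, whence
\[
 \norm{P-Q}^2=\norm{(P-Q)^2}=\max\bigl\{\norm{PQ^\perp P},\norm{P^\perp QP^\perp}\bigr\}=\max\bigl\{\norm{Q^\perp P}^2,\norm{QP^\perp}^2\bigr\}\,,
\]
where the last step uses $PQ^\perp P=(Q^\perp P)^*(Q^\perp P)$ and $P^\perp QP^\perp=(QP^\perp)^*(QP^\perp)$. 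Combining this with the identity $\norm{\sin\Theta}=\norm{P-Q}$ from \eqref{eq:projDiffNorm}, it remains to prove $\norm{Q^\perp P}\le\frac{\pi}{2}\,\frac{\norm V}{d}$ and $\norm{QP^\perp}\le\frac{\pi}{2}\,\frac{\norm V}{d}$.

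Next I would set up the two Sylvester equations. Since $Q^\perp$ reduces $A+V$ and $P$ reduces $A$, a short computation on $\Dom(A)\cap\Ran P$ shows that $Q^\perp P$, viewed as an operator from $\Ran P$ to $\Ran Q^\perp$, satisfies
\[
 (A+V)|_{\Ran Q^\perp}\,(Q^\perp P)-(Q^\perp P)\,A|_{\Ran P}=Q^\perp VP\,,
\]
and, symmetrically, $QP^\perp\colon\Ran P^\perp\to\Ran Q$ satisfies
\[
 (A+V)|_{\Ran Q}\,(QP^\perp)-(QP^\perp)\,A|_{\Ran P^\perp}=QVP^\perp\,.
\]
Here $(A+V)|_{\Ran Q^\perp}$ and $A|_{\Ran P}$ are self-adjoint (in general unbounded) with spectra $\Omega$ and $\sigma$, so $\dist(\spec((A+V)|_{\Ran Q^\perp}),\spec(A|_{\Ran P}))=\dist(\Omega,\sigma)\ge d$ by hypothesis; likewise $(A+V)|_{\Ran Q}$ and $A|_{\Ran P^\perp}$ have spectra $\omega$ and $\Sigma$, at distance at least $d$. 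The right-hand sides obey $\norm{Q^\perp VP}\le\norm V$ and $\norm{QVP^\perp}\le\norm V$.

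I would then invoke the sharp norm bound for the Sylvester equation with self-adjoint coefficients: if $B_1,B_2$ are self-adjoint operators on Hilbert spaces $\cH_1,\cH_2$ with $\dist(\spec B_1,\spec B_2)\ge d>0$ and $Y\in\cL(\cH_2,\cH_1)$, then $B_1X-XB_2=Y$ has a unique bounded solution $X$, and this solution satisfies $\norm X\le\frac{\pi}{2d}\norm Y$ (cf.\ \cite{BDM83}). One way to obtain this is the representation $X=\int_\R f(t)\,\ee^{\ii tB_1}Y\,\ee^{-\ii tB_2}\,\dd t$, valid for any $f\in L^1(\R)$ whose Fourier transform agrees with $s\mapsto 1/s$ on $\{\abs{s}\ge d\}$, combined with the fact that the infimum of $\norm f_{L^1}$ over all such $f$ equals $\frac{\pi}{2d}$. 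Applying this to the two equations above (with $B_1,B_2$ the respective restrictions of $A+V$ and $A$, and $Y=Q^\perp VP$ resp.\ $Y=QVP^\perp$) gives $\norm{Q^\perp P}\le\frac{\pi}{2}\,\frac{\norm V}{d}$ and $\norm{QP^\perp}\le\frac{\pi}{2}\,\frac{\norm V}{d}$, which together with the first paragraph finishes the proof.

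The only genuinely nontrivial ingredient is the Sylvester bound with the optimal constant $\frac{\pi}{2}$, in particular its validity when $A$, and hence one of the two coefficient operators, is unbounded; the verification that $Q^\perp P$ and $QP^\perp$ solve the displayed equations in the appropriate strong sense on the respective domains is a routine check. Everything else is elementary algebra of orthogonal projections.
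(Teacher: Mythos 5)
Your proof is correct and follows essentially the paper's own route: your block-diagonal computation of $(P-Q)^2$ is exactly the identity \eqref{eq:projDiffNormMax}, and the bound on each cross term $\norm{Q^\perp P}$, $\norm{QP^\perp}$ is precisely Proposition \ref{prop:sinTheta0}, which the paper simply cites while you re-derive it from the Sylvester bound of Theorem \ref{thm:Sylvester} applied to the parts of $A$ and $A+V$. That derivation is sound (the parts are self-adjoint with spectra contained in $\overline{\sigma}$, $\overline{\Sigma}$, $\overline{\omega}$, $\overline{\Omega}$, so the distance hypotheses pass to them), and it is in fact the same argument the paper writes out in the more general Proposition \ref{prop:symmSinThetasnIdeals} of Section \ref{sec:snIdeals}.
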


The proof of Proposition \ref{prop:symmSinTheta} mainly relies on the following well-known result.
\begin{proposition}[see, e.g., {\cite[Proposition 3.4]{AM12:2}}]\label{prop:sinTheta0}
 Let $A$ be a self-adjoint operator on a separable Hilbert space $\cH$. Moreover, let $V\in\cL(\cH)$ be self-adjoint, and let
 $\delta,\Delta\subset\R$ be two Borel sets. Then
 \[
  \dist(\delta,\Delta)\norm{\EE_A(\delta)\EE_{A+V}(\Delta)} \le \frac{\pi}{2}\,\norm{\EE_A(\delta)V\EE_{A+V}(\Delta)}\,.
 \]
\end{proposition}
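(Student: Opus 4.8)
The plan is to prove the final statement, Proposition \ref{prop:sinTheta0}, which is the operator-norm estimate
\[
 \dist(\delta,\Delta)\,\norm{\EE_A(\delta)\EE_{A+V}(\Delta)} \le \frac{\pi}{2}\,\norm{\EE_A(\delta)V\EE_{A+V}(\Delta)}\,.
\]
The key observation is that $\EE_A(\delta)$ and $\EE_{A+V}(\Delta)$ reduce $A$ and $A+V$ respectively, so that on the relevant subspaces the operators $A$ and $A+V$ have spectra confined to $\delta$ and $\Delta$. Writing $P:=\EE_A(\delta)$ and $\tilde Q:=\EE_{A+V}(\Delta)$, one computes, using $AP=PAP$ and $(A+V)\tilde Q=\tilde Q(A+V)\tilde Q$,
\begin{equation}\label{eq:sylvester}
 A\,P\tilde Q - P\tilde Q\,(A+V) = -P\,V\,\tilde Q\,,
\end{equation}
after inserting $V=(A+V)-A$ and regrouping. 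This is a Sylvester-type equation for the operator $X:=P\tilde Q$, with $A$ acting on the left restricted to $\Ran P$ (spectrum in $\overline\delta$) and $A+V$ acting on the right restricted to $\Ran\tilde Q$ (spectrum in $\overline\Delta$); the right-hand side is $-PV\tilde Q$.

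First I would reduce to the restricted operators: let $A_\delta:=A|_{\Ran P}$ with $\spec(A_\delta)\subset\overline\delta$ and $B_\Delta:=(A+V)|_{\Ran\tilde Q}$ with $\spec(B_\Delta)\subset\overline\Delta$, both self-adjoint, and regard \eqref{eq:sylvester} as $A_\delta X - X B_\Delta = -PV\tilde Q$ with $X=P\tilde Q\in\cL(\Ran\tilde Q,\Ran P)$. The core of the argument is then the quantitative solvability bound for the Sylvester equation $A_\delta X - X B_\Delta = W$ under the spectral separation $\dist(\spec(A_\delta),\spec(B_\Delta))\ge\dist(\delta,\Delta)=:\rho$: one has $\norm{X}\le\frac{\pi}{2\rho}\norm{W}$. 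The standard way I would establish this is via the integral representation for the solution of the Sylvester equation with separated self-adjoint spectra, namely
\[
 X = \int_{-\infty}^{\infty} \ee^{\ii t A_\delta}\,W\,\ee^{-\ii t B_\Delta}\,f(t)\,\dd t
\]
for a suitable kernel $f$ whose Fourier transform is $1/(x-y)$ on the relevant spectral region, and whose $L^1$-norm (restricted to the admissible frequency support of width $\ge\rho$) yields exactly the constant $\pi/(2\rho)$. Applying this with $W=-PV\tilde Q$ and noting $\norm{PV\tilde Q}=\norm{\EE_A(\delta)V\EE_{A+V}(\Delta)}$ and $\norm{X}=\norm{P\tilde Q}=\norm{\EE_A(\delta)\EE_{A+V}(\Delta)}$ gives the claim after multiplying through by $\rho$.

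The main obstacle is the sharp constant $\pi/2$ in the Sylvester bound, rather than a crude constant: obtaining exactly $\pi/2$ requires the optimal choice of kernel $f$ and a careful estimate of its $L^1$-norm subject to the frequency-support constraint imposed by the spectral gap, together with a spectral-integral justification that the formal integral converges (in the strong sense) and genuinely solves \eqref{eq:sylvester} when $A$ is possibly unbounded. A secondary technical point is handling the unboundedness of $A$ and the a priori non-closedness of the ranges: one should verify that the manipulations leading to \eqref{eq:sylvester} are legitimate on a suitable common core and that $X=P\tilde Q$ indeed lands in the domain of the restricted operator. Since the statement is quoted as well-known with a precise reference (\cite[Proposition 3.4]{AM12:2}), I would keep the write-up brief, citing the sharp Sylvester estimate and reducing the proof to verifying the commutation identity \eqref{eq:sylvester} and identifying the operator norms on the two sides.
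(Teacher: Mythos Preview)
Your proposal is correct and follows exactly the approach the paper has in mind: the paper does not actually prove Proposition~\ref{prop:sinTheta0} but cites it as known from \cite[Proposition~3.4]{AM12:2}, and the argument there is precisely the one you outline---derive the Sylvester identity $P\tilde Q(A+V)-AP\tilde Q=PV\tilde Q$ for $P=\EE_A(\delta)$, $\tilde Q=\EE_{A+V}(\Delta)$, restrict to the parts $A|_{\Ran P}$ and $(A+V)|_{\Ran\tilde Q}$, and invoke the sharp bound \eqref{eq:normBoundSylv}--\eqref{eq:Nagy} from Theorem~\ref{thm:Sylvester} (the same scheme the paper itself employs in the proof of Proposition~\ref{prop:symmSinThetasnIdeals}). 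Just add the trivial remark that the case $\dist(\delta,\Delta)=0$ needs no argument, so that the spectral separation hypothesis of Theorem~\ref{thm:Sylvester} is available.
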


\begin{proof}[Proof of Proposition \ref{prop:symmSinTheta}]
 Proposition \ref{prop:sinTheta0} implies that
 \[
  \norm{\EE_A(\sigma)\EE_{A+V}(\Omega)} \le \frac{\pi}{2}\, \frac{\norm{\EE_{A}(\sigma)V\EE_{A+V}(\Omega)}}{d}
  \le \frac{\pi}{2}\, \frac{\norm{V}}{d}
 \]
 and, in the same way, that
 \[
  \norm{\EE_A(\Sigma)\EE_{A+V}(\omega)} \le \frac{\pi}{2}\,\frac{\norm{\EE_{A}(\Sigma)V\EE_{A+V}(\omega)}}{d}
  \le \frac{\pi}{2}\, \frac{\norm{V}}{d}\,.
 \]
 The claim now follows from \eqref{eq:projDiffNorm} and the identity
 \begin{equation}\label{eq:projDiffNormMax}
  \norm{\EE_A(\sigma)-\EE_{A+V}(\omega)} = \max\{\norm{\EE_A(\sigma)\EE_{A+V}(\Omega)}, \norm{\EE_A(\Sigma)\EE_{A+V}(\omega)}\}\,,
 \end{equation}
 which is due to \eqref{eq:projDiff}. Note that $\EE_A(\sigma)^\perp=\EE_A(\Sigma)$ and
 $\EE_{A+V}(\omega)^\perp=\EE_{A+V}(\Omega)$.
\end{proof}%
An extension of Proposition \ref{prop:symmSinTheta} to symmetrically-normed ideals is discussed in more detail in Section
\ref{sec:snIdeals} below.

\begin{remark}\label{rem:symmSinTheta}
 Proposition \ref{prop:symmSinTheta} is a variant of the symmetric $\sin\Theta$ theorem from \cite[Proposition 6.1]{DK70}. There,
 for each of the pairs $(\sigma,\Omega)$ and $(\Sigma,\omega)$ it is additionally assumed that the convex hull of one of the sets
 is disjoint from the other set. As a consequence, instead of $\frac{\pi}{2}$, the constant $1$ appears in the conclusion of
 \cite[Proposition 6.1]{DK70}, cf.\ also Remark \ref{rem:Sylvester} below. Note that, in contrast to Theorem \ref{thm:intro},
 Proposition \ref{prop:symmSinTheta} requires information both on $\spec(A)$ and $\spec(A+V)$.

 Although not stated in this explicit way, Proposition \ref{prop:symmSinTheta} is present in several recent works. For example, in
 the case where the operator $A$ is assumed to be bounded, it is used to prove \cite[Theorem 1]{KMM03} and \cite[Theorem 1]{KMM07}.
 In the unbounded setting, it appears, for instance, in the proof of \cite[Theorem 3.5]{AM12:2}.
\end{remark}

%%%%%%%%%%%%%%%%%%%%%%%%%%%%%%%%%%%%%%%%%%%%%%%%%%%%%%%%%%%%%%%%%%%%%%%%%%%%%%%%%%%%%%%%%%%%%%%%%%%%%%%%%%%%%%%%%%%%%%%%%%%%%%%%%%%
%%% Proof \sin2\Theta theorem
%%%%%%%%%%%%%%%%%%%%%%%%%%%%%%%%%%%%%%%%%%%%%%%%%%%%%%%%%%%%%%%%%%%%%%%%%%%%%%%%%%%%%%%%%%%%%%%%%%%%%%%%%%%%%%%%%%%%%%%%%%%%%%%%%%%
Let us recall that a closed subspace $\cU\subset\cH$ is called \emph{invariant} for a linear operator $B$ in $\cH$ if $B$ maps the
intersection $\Dom(B)\cap \cU$ into $\cU$.

The subspace $\cU$ is called \emph{reducing} for $B$ if both $\cU$ and its orthogonal complement $\cU^\perp$ are invariant for $B$
and the domain $\Dom(B)$ splits as
\begin{equation}\label{eq:domSplitting}
 \Dom(B) = \bigl(\Dom(B)\cap \cU\bigr) + \bigl(\Dom(B)\cap \cU^\perp\bigr)\,.
\end{equation}
Clearly, $\cU$ is reducing for $B$ if and only if $\cU^\perp$ is.

We are now able to proof the main result of this work.

\begin{proof}[Proof of Theorem \ref{thm:intro}]
 In essence, we follow the proof in \cite[Section 7]{DK70}.
 
 As in Lemma \ref{lem:sin2ThetasinTheta}, let $K$ denote the self-adjoint unitary operator on $\cH$ given by
 \[
  K := Q-Q^\perp\,.
 \]
 Since $\Ran Q$ is reducing for $A+V$, the splitting property \eqref{eq:domSplitting} implies that $K$ maps $\Dom(A+V)=\Dom(A)$
 onto itself. It also follows from \eqref{eq:domSplitting} and the invariance of the subspaces $\Ran Q$ and $\Ran Q^\perp$ that
 $K(A+V)Kx=(A+V)x$ for $x\in\Dom(A)$, so that $K(A+V)K=A+V$. The operator
 \begin{equation}\label{eq:defD}
  D:=KAK \quad\text{ on }\quad \Dom(D):=\Dom(A)
 \end{equation}
 is therefore self-adjoint and satisfies
 \begin{equation}\label{eq:DpertA}
  D=K(A+V)K - KVK = A+V - KVK\,.
 \end{equation}
 Clearly, the spectra of $A$ and $D$ coincide, that is,
 \[
  \spec(D) = \spec(A) = \sigma \cup \Sigma\,.
 \]
 In particular, one has
 \begin{equation}\label{eq:specProj}
  \EE_D(\sigma)=K\EE_A(\sigma)K \quad\text{ and }\quad \EE_D(\Sigma)= K\EE_A(\Sigma)K\,.
 \end{equation}
 
 Considering $D$ by \eqref{eq:DpertA} as a perturbation of $A$, and taking into account that $\dist(\sigma,\Sigma)=d>0$, it now
 follows from Proposition \ref{prop:symmSinTheta} that
 \[
  \norm{\sin\bigl(\Theta(\EE_A(\sigma),\EE_D(\sigma))\bigr)}
  \le \frac{\pi}{2}\,\frac{\norm{V-KVK}}{d}
  \le \frac{\pi}{2}\cdot 2\,\frac{\norm{V}}{d}\,,
 \]
 where the last inequality is due to the fact that $\norm{KVK}=\norm{V}$ since $K$ is unitary. In view of \eqref{eq:specProj} and
 Lemma \ref{lem:sin2ThetasinTheta}, this proves the claim.
\end{proof}%

%%%%%%%%%%%%%%%%%%%%%%%%%%%%%%%%%%%%%%%%%%%%%%%%%%%%%%%%%%%%%%%%%%%%%%%%%%%%%%%%%%%%%%%%%%%%%%%%%%%%%%%%%%%%%%%%%%%%%%%%%%%%%%%%%%%
%%% Discussion of \sin2\Theta theorem and proof of Corollary 2
%%%%%%%%%%%%%%%%%%%%%%%%%%%%%%%%%%%%%%%%%%%%%%%%%%%%%%%%%%%%%%%%%%%%%%%%%%%%%%%%%%%%%%%%%%%%%%%%%%%%%%%%%%%%%%%%%%%%%%%%%%%%%%%%%%%
If, in the situation of Theorem \ref{thm:intro}, it is known that $\theta:=\norm{\Theta}\le\frac{\pi}{4}$, then one has
$\norm{\sin2\Theta}=\sin2\theta$. In this case, taking into account \eqref{eq:projDiffNorm}, the bound \eqref{eq:introSin2Theta}
can equivalently be rewritten as
\begin{equation}\label{eq:maxAngleBound}
 \theta = \arcsin\bigl(\norm{\EE_A(\sigma)-Q}\bigr) \le \frac{1}{2}\arcsin\Bigl(\frac{\pi}{2}\cdot 2\,\frac{\norm{V}}{d}\Bigr)\,,
\end{equation}
see also \cite[Remark 4.2]{AM12:2}. The quantity $\theta$ is called the \emph{maximal angle} between the subspaces
$\Ran\EE_A(\sigma)$ and $\Ran Q$, see \cite[Definition 2.1]{AM12:2}.

However, the condition $\theta\le\frac{\pi}{4}$ does not need to be satisfied for arbitrary reducing subspaces for $A+V$, even if
the perturbation $V$ is small in norm. In fact, although the spectrum of $\Theta$ is known to have a gap around $\frac{\pi}{4}$
whenever $\norm{V}<\frac{d}{\pi}$, the following observation illustrates that the operator angle $\Theta$ may a priori have
spectrum everywhere else in the interval $\bigl[0,\frac{\pi}{2}\bigr]$.

\begin{remark}\label{rem:opAngle}
 In addition to the hypotheses of Theorem \ref{thm:intro}, assume that $\norm{V}<\frac{d}{\pi}$ and that
 $\norm{\Theta(P,Q)}<\frac{\pi}{4}$, where $P:=\EE_A(\sigma)$. The estimate \eqref{eq:maxAngleBound} then implies that
 \begin{equation}\label{eq:specThetaPQ}
  \spec\bigl(\Theta(P,Q)\bigr)\subset [0,\alpha] \quad\text{ with }\quad
  \alpha:=\frac{1}{2}\arcsin\Bigl(\frac{\pi}{2}\cdot 2\,\frac{\norm{V}}{d}\Bigr) < \frac{\pi}{4}\,.
 \end{equation}

 Since $S(P,Q^\perp)=C(P,Q)$ and, therefore, $\sin\bigl(\Theta(P,Q^\perp)\bigr) = \cos\bigl(\Theta(P,Q)\bigr)$, it follows from
 \eqref{eq:specThetaPQ} that
 \begin{equation}\label{eq:specThetaPQp}
  \spec\bigl(\Theta(P,Q^\perp)\bigr) \subset \Bigl[\frac{\pi}{2}-\alpha,\frac{\pi}{2}\Bigr]\,.
 \end{equation}

 Now, suppose that $R$ is an orthogonal projection onto a reducing subspace for $A+V$ such that
 \[
  \Ran R \cap \Ran Q \neq \{0\} \neq \Ran R\cap \Ran Q^\perp\,.
 \]
 Let $x\in\Ran R\cap\Ran Q$ with $\norm{x}=1$. Using the identity $(P-R)x=(P-Q)x$ and the inclusion \eqref{eq:specThetaPQ}, one
 observes that
 \begin{equation}\label{eq:numThetaPR1}
  \begin{aligned}
   \langle x,\sin^2\bigl(\Theta(P,R)\bigr)x\rangle &= \langle x, (P-R)^2x\rangle = \langle x,(P-Q)^2x\rangle\\
   &= \langle x,\sin^2\bigl(\Theta(P,Q)\bigr)x\rangle \le \sin^2\alpha\,.
  \end{aligned}
 \end{equation}
 Taking into account \eqref{eq:specThetaPQp}, for $y\in\Ran R\cap\Ran Q^\perp$, $\norm{y}=1$, one obtains in a similar way that
 \begin{equation}\label{eq:numThetaPR2}
  \langle y,\sin^2\bigl(\Theta(P,R)\bigr)y\rangle = \langle y, \sin^2\bigl(\Theta(P,Q^\perp)\bigr)y\rangle
  \ge \sin^2\Bigl(\frac{\pi}{2}-\alpha\Bigr)\,.
 \end{equation}
 Combining \eqref{eq:numThetaPR1} and \eqref{eq:numThetaPR2} yields that $\Theta(P,R)$ has spectrum both in $[0,\alpha]$ and
 $\bigl[\frac{\pi}{2}-\alpha,\frac{\pi}{2}\bigr]$.

 Thus, depending on the reducing subspace for $A+V$ that is considered, the operator angle has spectrum in $[0,\alpha]$,
 $\bigl[\frac{\pi}{2}-\alpha,\frac{\pi}{2}\bigr]$, or both.
\end{remark}

In the situation of Corollary \ref{cor:intro}, the projection $Q$ is chosen very specifically, namely
$Q=\EE_{A+V}\bigl(\cO_{d/2}(\sigma)\bigr)$. Provided that $\norm{V}<\frac{d}{2}$, the spectral subspace
$\Ran \EE_{A+V}\bigl(\cO_{d/2}(\sigma)\bigr)$ can be regarded as the perturbation of the spectral subspace $\Ran \EE_A(\sigma)$ for
the unperturbed operator $A$, see \cite{AM12:2}. It turns out that, in this case, the condition $\theta\le\frac{\pi}{4}$ is
automatically satisfied whenever $\norm{V}\le\frac{d}{\pi}$. Indeed, the mapping
$[0,1]\ni t\mapsto\EE_{A+tV}\bigl(\cO_{d/2}(\sigma)\bigr)$ is norm continuous, see \cite[Theorem 3.5]{AM12:2}; in fact, this
follows from the symmetric $\sin\Theta$ theorem. Corollary \ref{cor:intro} is therefore a direct consequence of the following more
general statement.

\begin{lemma}\label{lem:cor}
 Let $A$, $V$, and $Q$ be as in Theorem \ref{thm:intro}, and suppose that $\norm{V}\le\frac{d}{\pi}$. If there is a norm continuous
 path $[0,1]\ni t\mapsto P_t$ of orthogonal projections in $\cH$ with $P_0=\EE_A(\sigma)$ and $P_1=Q$ such that $\Ran P_t$ is
 reducing for $A+tV$ for all $t\in[0,1]$, then
 \[
  \arcsin\bigl(\norm{\EE_A(\sigma)-Q}\bigr) \le \frac{1}{2}\arcsin\Bigl(\frac{\pi}{2}\cdot 2\,\frac{\norm{V}}{d}\Bigr)
  \le \frac{\pi}{4}\,.
 \]
 
 \begin{proof}
  In view of Theorem \ref{thm:intro} (or more precisely, estimate \eqref{eq:maxAngleBound}), it suffices to show the inequality
  \begin{equation}\label{eq:anglePi4}
   \arcsin\bigl(\norm{\EE_A(\sigma)-Q}\bigr)\le\frac{\pi}{4}\,.
  \end{equation}
  
  Assume that \eqref{eq:anglePi4} does not hold. Then, since the path $[0,1]\ni t\mapsto P_t$ is assumed to be norm continuous with
  $P_0=\EE_A(\sigma)$ and $P_1=Q$, there is $\tau\in(0,1)$ such that
  \begin{equation}\label{eq:maxAnglePi4}
   \arcsin\bigl(\norm{\EE_A(\sigma)-P_\tau}\bigr) = \frac{\pi}{4}\,.
  \end{equation}
  On the other hand, taking into account that $\Ran P_\tau$ is reducing for $A+\tau V$ and that $\tau\norm{V}<\frac{d}{\pi}$, it
  follows from inequality \eqref{eq:maxAngleBound} that
  \[
   \arcsin\bigl(\norm{\EE_A(\sigma)-P_\tau}\bigr) \le \frac{1}{2}\arcsin\Bigl(\frac{\pi}{2}\cdot 2\,\frac{\norm{\tau V}}{d}\Bigr)
   < \frac{\pi}{4}\,,
  \]
  which is a contradiction to \eqref{eq:maxAnglePi4}. This shows inequality \eqref{eq:anglePi4}.
 \end{proof}%
\end{lemma}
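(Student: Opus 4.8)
The plan is to reduce Lemma~\ref{lem:cor} to the already-established estimate \eqref{eq:maxAngleBound} from Theorem~\ref{thm:intro}, which requires only that the relevant operator angle have norm at most $\frac{\pi}{4}$; concretely, once we know $\arcsin\bigl(\norm{\EE_A(\sigma)-Q}\bigr)\le\frac{\pi}{4}$, the claimed bound follows immediately by applying \eqref{eq:maxAngleBound} with the reducing subspace $\Ran Q$ for $A+V=A+1\cdot V$. So the whole content is the auxiliary inequality \eqref{eq:anglePi4}, namely $\arcsin\bigl(\norm{\EE_A(\sigma)-Q}\bigr)\le\frac{\pi}{4}$, and this is where the hypothesis of a norm continuous path $t\mapsto P_t$ joining $\EE_A(\sigma)$ to $Q$ through projections onto reducing subspaces for $A+tV$ enters.

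The argument for \eqref{eq:anglePi4} is a connectedness/continuity argument, run as a proof by contradiction. First I would suppose \eqref{eq:anglePi4} fails, i.e.\ $\arcsin\bigl(\norm{\EE_A(\sigma)-P_1}\bigr)>\frac{\pi}{4}$. Since $P_0=\EE_A(\sigma)$ gives $\arcsin\bigl(\norm{\EE_A(\sigma)-P_0}\bigr)=0<\frac{\pi}{4}$, and since the scalar function $t\mapsto\arcsin\bigl(\norm{\EE_A(\sigma)-P_t}\bigr)$ is continuous on $[0,1]$ (composition of the continuous path $t\mapsto P_t$, the norm, and $\arcsin$), the intermediate value theorem yields some $\tau\in(0,1)$ with $\arcsin\bigl(\norm{\EE_A(\sigma)-P_\tau}\bigr)=\frac{\pi}{4}$. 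Now the key point: at this $\tau$, the subspace $\Ran P_\tau$ is reducing for $A+\tau V$, and $\tau\norm{V}\le\norm{V}\le\frac{d}{\pi}$ with $\tau<1$ gives the strict inequality $\tau\norm{V}<\frac{d}{\pi}$, hence $\frac{\pi}{2}\cdot2\,\frac{\norm{\tau V}}{d}<1$. Applying \eqref{eq:maxAngleBound} to the pair $A$, $\tau V$ and the projection $P_\tau$ — which is legitimate precisely because we are in the regime $\theta\le\frac{\pi}{4}$ that makes $\norm{\sin2\Theta}=\sin2\theta$ — then gives $\arcsin\bigl(\norm{\EE_A(\sigma)-P_\tau}\bigr)\le\frac{1}{2}\arcsin\bigl(\frac{\pi}{2}\cdot2\,\frac{\norm{\tau V}}{d}\bigr)<\frac{\pi}{4}$, contradicting the equality at $\tau$.

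The one subtle point — and the place where care is needed rather than heavy computation — is the justification that \eqref{eq:maxAngleBound} may be invoked at $\tau$. The estimate \eqref{eq:maxAngleBound} was derived under the standing assumption $\norm{\Theta}\le\frac{\pi}{4}$, so I would phrase the contradiction step so as to consider the first $\tau$ (or an appropriate sup over the closed set where $\arcsin\bigl(\norm{\EE_A(\sigma)-P_t}\bigr)\le\frac{\pi}{4}$) at which the maximal angle reaches $\frac{\pi}{4}$; on the closed subinterval $[0,\tau]$ one has $\theta_t\le\frac{\pi}{4}$ throughout, so \eqref{eq:maxAngleBound} applies at $\tau$ itself by continuity. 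Then the strict inequality $\tau\norm V<d/\pi$ forces $\theta_\tau<\pi/4$ strictly, which is the contradiction. Everything else — continuity of the composite scalar function, the fact that $\Ran P_\tau$ is reducing for $A+\tau V$ by hypothesis, and that $\norm{\tau V}=\tau\norm V$ — is routine. Finally, the second inequality $\frac{1}{2}\arcsin\bigl(\frac{\pi}{2}\cdot2\,\frac{\norm V}{d}\bigr)\le\frac{\pi}{4}$ in the statement is just the monotonicity of $\arcsin$ together with $\frac{\pi}{2}\cdot2\,\frac{\norm V}{d}\le\frac{\pi}{2}\cdot2\cdot\frac{1}{\pi}=1$, so nothing further is needed.
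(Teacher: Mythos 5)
Your argument is correct and coincides with the paper's own proof: both reduce the claim to the bound \eqref{eq:anglePi4} via \eqref{eq:maxAngleBound} and establish it by the same intermediate-value/contradiction argument at some $\tau\in(0,1)$ with $\arcsin\bigl(\norm{\EE_A(\sigma)-P_\tau}\bigr)=\frac{\pi}{4}$, using that $\Ran P_\tau$ is reducing for $A+\tau V$ and $\tau\norm{V}<\frac{d}{\pi}$. Your extra precaution about taking the \emph{first} such $\tau$ is harmless but not needed, since the equality $\theta_\tau=\frac{\pi}{4}$ itself already places you in the regime $\theta\le\frac{\pi}{4}$ where \eqref{eq:maxAngleBound} is valid.
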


\begin{remark}
 The bound \eqref{eq:introMaxAngleBound} from Corollary \ref{cor:intro} has already been mentioned in \cite[Remark 4.4]{AM12:2},
 but only for the particular case of perturbations $V$ satisfying $\norm{V}\le\frac{\ee-1}{2\ee}\,d$, where
 $\frac{4}{\pi^2+4}<\frac{\ee-1}{2\ee}<\frac{1}{\pi}$. For those perturbations, the condition $\theta\le\frac{\pi}{4}$ has been
 ensured by use of other known bounds on $\theta=\norm{\Theta}$.
\end{remark}

%%%%%%%%%%%%%%%%%%%%%%%%%%%%%%%%%%%%%%%%%%%%%%%%%%%%%%%%%%%%%%%%%%%%%%%%%%%%%%%%%%%%%%%%%%%%%%%%%%%%%%%%%%%%%%%%%%%%%%%%%%%%%%%%%%%
%%% \sin2\Theta and Corollary 2: Davis-Kahan case
%%%%%%%%%%%%%%%%%%%%%%%%%%%%%%%%%%%%%%%%%%%%%%%%%%%%%%%%%%%%%%%%%%%%%%%%%%%%%%%%%%%%%%%%%%%%%%%%%%%%%%%%%%%%%%%%%%%%%%%%%%%%%%%%%%%
We close this section with a discussion of Theorem \ref{thm:intro} and Corollary \ref{cor:intro} under the additional spectral
separation conditions from \cite{DK70}.

\begin{remark}\label{rem:sin2Theta}
 In addition to the hypotheses of Theorem \ref{thm:intro}, assume that the convex hull of one of the sets $\sigma$ and $\Sigma$ is
 disjoint from the other set. In this case, the constant $\frac{\pi}{2}$ in the bound \eqref{eq:introSin2Theta} can be replaced by
 $1$, see Remark \ref{rem:symmSinTheta}. The resulting estimate is the bound from the Davis-Kahan $\sin2\Theta$ theorem in
 \cite{DK70}, that is,
 \[
  \norm{\sin2\Theta} \le 2\,\frac{\norm{V}}{d}\,.
 \]
 For the particular case of $Q=\EE_{A+V}\bigl(\cO_{d/2}(\sigma)\bigr)$, as in Corollary \ref{cor:intro} this bound can equivalently
 be rewritten as
 \begin{equation}\label{eq:DKsin2ThetamaxAngle}
  \arcsin\bigl(\norm{\EE_A(\sigma)-\EE_{A+V}\bigl(\cO_{d/2}(\sigma)\bigr)}\bigr)
  \le \frac{1}{2}\arcsin\Bigl(2\,\frac{\norm{V}}{d}\Bigr) < \frac{\pi}{4}\,,
 \end{equation}
 whenever $\norm{V}<\frac{d}{2}$. It has already been stated by Davis in \cite[Theorem 5.1]{Davis63} that this estimate is sharp in
 the sense that equality can be attained. This can be seen from the following example of $2\times2$ matrices: Let
 \[
  A := \begin{pmatrix} 1 & 0\\ 0 & -1 \end{pmatrix}\quad\text{ with }\quad \sigma:=\{1\}\quad\text{ and }\quad \Sigma:=\{-1\}\,.
 \]
 Obviously, one has $d:=\dist(\sigma,\Sigma)=2$. For arbitrary $x$ with $0<x<1=\frac{d}{2}$ consider
 \[
  V := \begin{pmatrix} -x^2 & x\sqrt{1-x^2}\\ x\sqrt{1-x^2} & x^2 \end{pmatrix}.
 \]
 It is easy to verify that $\norm{V}=x$ and that $\spec(A+V)=\{\pm\sqrt{1-x^2}\}$.
 
 Denote $\theta:=\frac{1}{2}\arcsin(x)<\frac{\pi}{4}$. Then, one has
 \begin{equation}\label{eq:DKSharpTanCot}
  \frac{1-\sqrt{1-x^2}}{x} = \frac{1-\cos(2\theta)}{\sin(2\theta)}=\tan\theta\quad\text{ and }\quad
  \frac{1+\sqrt{1-x^2}}{x}=\cot\theta\,.
 \end{equation}
 Using \eqref{eq:DKSharpTanCot}, a straightforward computation shows that
 \[
  U^*(A+V)U = \begin{pmatrix} \sqrt{1-x^2} & 0\\ 0 & -\sqrt{1-x^2} \end{pmatrix}\quad\text{ where }\quad
  U = \begin{pmatrix} \cos\theta & -\sin\theta\\ \sin\theta & \cos\theta \end{pmatrix}.
 \]
 In particular, this implies that
 \[
  \EE_{A+V}\bigl(\cO_1(\sigma)\bigr)
  = \begin{pmatrix} \cos\theta\\ \sin\theta \end{pmatrix}  \begin{pmatrix} \cos\theta & \sin\theta \end{pmatrix}
  = \begin{pmatrix} \cos^2\theta & \sin\theta\cos\theta\\ \sin\theta\cos\theta & \sin^2\theta\end{pmatrix},
 \]
 so that
 \[
  \arcsin\bigl(\norm{\EE_A(\sigma)-\EE_{A+V}\bigl(\cO_1(\sigma)\bigr)}\bigr)=\theta=\frac{1}{2}\arcsin(x)
  =\frac{1}{2}\arcsin\Bigl(2\,\frac{\norm{V}}{d}\Bigr)\,.
 \]
 Hence, inequality \eqref{eq:DKsin2ThetamaxAngle} is sharp.
\end{remark}

%%%%%%%%%%%%%%%%%%%%%%%%%%%%%%%%%%%%%%%%%%%%%%%%%%%%%%%%%%%%%%%%%%%%%%%%%%%%%%%%%%%%%%%%%%%%%%%%%%%%%%%%%%%%%%%%%%%%%%%%%%%%%%%%%%%
%%%%%%%%%%%%%%%%%%%%%%%%%%%%%%%%%%%%%%%%%%%%%%%%%%%%%%%%%%%%%%%%%%%%%%%%%%%%%%%%%%%%%%%%%%%%%%%%%%%%%%%%%%%%%%%%%%%%%%%%%%%%%%%%%%%
%%% Section: sin 2\theta estimate
%%%%%%%%%%%%%%%%%%%%%%%%%%%%%%%%%%%%%%%%%%%%%%%%%%%%%%%%%%%%%%%%%%%%%%%%%%%%%%%%%%%%%%%%%%%%%%%%%%%%%%%%%%%%%%%%%%%%%%%%%%%%%%%%%%%
%%%%%%%%%%%%%%%%%%%%%%%%%%%%%%%%%%%%%%%%%%%%%%%%%%%%%%%%%%%%%%%%%%%%%%%%%%%%%%%%%%%%%%%%%%%%%%%%%%%%%%%%%%%%%%%%%%%%%%%%%%%%%%%%%%%
\section{The generic $\sin2\theta$ estimate}\label{sec:sin2theta}

In this section, we present an alternative, straightforward proof of the generic $\sin2\theta$ estimate \eqref{eq:introSin2theta}
that uses a different technique than the one presented for Theorem \ref{thm:intro} and, at the same time, is more direct than the
one in \cite{AM12:2}.

It is worth mentioning that the inequality \eqref{eq:maxAngleBound}, and therefore also Corollary \ref{cor:intro}, can be deduced
from the estimate \eqref{eq:introSin2theta} as well since $\norm{\sin2\Theta}=\sin2\theta$ whenever
$\theta=\norm{\Theta}\le\frac{\pi}{4}$. An immediate advantage of the $\sin2\theta$ estimate is that it can be formulated without
the notion of the operator angle, see Proposition \ref{prop:sin2theta} below.

%%%%%%%%%%%%%%%%%%%%%%%%%%%%%%%%%%%%%%%%%%%%%%%%%%%%%%%%%%%%%%%%%%%%%%%%%%%%%%%%%%%%%%%%%%%%%%%%%%%%%%%%%%%%%%%%%%%%%%%%%%%%%%%%%%%
%%% Sylvester equation
%%%%%%%%%%%%%%%%%%%%%%%%%%%%%%%%%%%%%%%%%%%%%%%%%%%%%%%%%%%%%%%%%%%%%%%%%%%%%%%%%%%%%%%%%%%%%%%%%%%%%%%%%%%%%%%%%%%%%%%%%%%%%%%%%%%
Given two self-adjoint operators $B_0$ and $B_1$ on Hilbert spaces $\cH_0$ and $\cH_1$, respectively, recall that a bounded
operator $Y\in\cL(\cH_0,\cH_1)$ is called a \emph{strong solution to the operator Sylvester equation}
\begin{equation}\label{eq:defSylEq}
 YB_0 - B_1Y = T\,,\quad T\in\cL(\cH_0,\cH_1)\,, 
\end{equation}
if
\[
 \Ran(Y|_{\Dom(B_0)}) \subset \Dom(B_1)
\]
and
\begin{equation}\label{eq:strongSylvester}
 YB_0g - B_1Yg = Tg \quad\text{ for }\quad g\in\Dom(B_0)\,.
\end{equation}

We need the following well-known result, which also plays a crucial role for the extension of the symmetric $\sin\Theta$ theorem in
Section \ref{sec:snIdeals}, see Proposition \ref{prop:symmSinThetasnIdeals} below.

\begin{theorem}\label{thm:Sylvester}
 Let $B_0$ and $B_1$ be two self-adjoint operators on Hilbert spaces $\cH_0$ and $\cH_1$, respectively, such that
 \begin{equation}\label{eq:SylvSpecSep}
  d:=\dist\bigl(\spec(B_0),\spec(B_1)\bigr) > 0\,.
 \end{equation}
 Then, the Sylvester equation \eqref{eq:defSylEq} has a unique strong solution $Y\in\cL(\cH_0,\cH_1)$. This solution admits
 \begin{equation}\label{eq:SylSol}
  \langle h, Yg \rangle = \int_\R  \langle h, \ee^{\ii tB_1}T\ee^{-\ii tB_0}g\rangle f_d(t)\,\dd t
  \quad\text{ for }\quad g\in\cH_0\,,\ h\in\cH_1\,,
 \end{equation}
 where $f_d$ is any function in $L^1(\R)$, continuous except at zero, such that
 \[
  \hat{f_d}(\lambda):=\int_\R \ee^{-\ii t\lambda} f_d(t)\,\dd t = \frac{1}{\lambda} \quad\text{ whenever }\quad
  \abs{\lambda}\ge d\,.
 \]
 In particular, $Y$ satisfies the norm bound
 \begin{equation}\label{eq:normBoundSylv}
  \norm{Y} \le c\,\frac{\norm{T}}{d}\,,
 \end{equation}
 where
 \begin{equation}\label{eq:Nagy}
  c = \inf\Bigl\{ \norm{f}_{L^1(\R)} \,\Bigm|\, f\in L^1(\R)\,,\ \hat{f}(\lambda)=\frac{1}{\lambda}\ \text{ whenever }\
      \abs{\lambda}\ge 1\Bigr\}
  = \frac{\pi}{2}\,,
 \end{equation}
 and this constant is sharp in \eqref{eq:normBoundSylv}.
 \begin{proof}
  This is obtained by combining \cite[Theorem 2.7]{AMM03} and \cite[Lemma 4.2]{AM11}; cf.\ also \cite[Remark 2.8]{AMM03},
  \cite[Theorem 3.2]{AM12:2}, and \cite[Theorem 4.1]{BDM83}. Note that the last equality in \eqref{eq:Nagy} goes back to Sz.-Nagy
  and Strausz \cite{SzNagy53}, \cite{SzNagy87}. The fact that the constant $c=\frac{\pi}{2}$ in \eqref{eq:normBoundSylv} is sharp
  is due to McEachin \cite{McEachin92}.
 \end{proof}%
\end{theorem}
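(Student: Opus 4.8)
The plan is to construct $Y$ directly from the right-hand side of \eqref{eq:SylSol}, to verify by hand that it is a strong solution, to establish uniqueness via an intertwining argument, and finally to identify the optimal constant. First fix a function $f_d$ as in the statement; the existence of such $f_d$, as well as the fact that $\norm{f_d}_{L^1(\R)}$ can be taken arbitrarily close to $c/d$ with $c$ given by \eqref{eq:Nagy}, is part of the classical analysis of Sz.-Nagy and Strausz, and by the rescaling $f_d(t)=g(dt)$ it suffices to consider $d=1$. Since $\ee^{\ii tB_1}$ and $\ee^{-\ii tB_0}$ are unitary, the integrand in \eqref{eq:SylSol} is bounded in modulus by $\norm{T}\,\norm{h}\,\norm{g}\,\abs{f_d(t)}$, so the right-hand side of \eqref{eq:SylSol} defines a bounded sesquilinear form on $\cH_1\times\cH_0$; the Riesz representation theorem then produces a unique $Y\in\cL(\cH_0,\cH_1)$ satisfying \eqref{eq:SylSol}, and simultaneously the a priori bound $\norm{Y}\le\norm{f_d}_{L^1(\R)}\,\norm{T}$.

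Next I would check the strong Sylvester identity. Inserting the spectral resolutions of $B_0$ and $B_1$ and applying Fubini recasts \eqref{eq:SylSol} as
\[
 \langle h,Yg\rangle=\iint\hat{f_d}(\nu-\mu)\,\dd\langle h,\EE_{B_1}(\mu)T\EE_{B_0}(\nu)g\rangle\,,
\]
and since the integration effectively runs over $\mu\in\spec(B_1)$ and $\nu\in\spec(B_0)$, hypothesis \eqref{eq:SylvSpecSep} forces $\abs{\nu-\mu}\ge d$ on the relevant set, so $\hat{f_d}(\nu-\mu)$ may be replaced throughout by $(\nu-\mu)^{-1}$; in particular $Y$ is independent of the choice of $f_d$. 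From this representation, for $g\in\Dom(B_0)$ the splitting $\mu/(\nu-\mu)=-1+\nu/(\nu-\mu)$ combined with $\abs{\nu-\mu}\ge d$ and $g\in\Dom(B_0)$ shows that $x\mapsto\langle B_1x,Yg\rangle$ is bounded on $\Dom(B_1)$, hence $Yg\in\Dom(B_1)$; and then, for $h\in\Dom(B_1)$,
\[
 \langle h,YB_0g\rangle-\langle h,B_1Yg\rangle=\iint\frac{\nu-\mu}{\nu-\mu}\,\dd\langle h,\EE_{B_1}(\mu)T\EE_{B_0}(\nu)g\rangle=\langle h,Tg\rangle\,,
\]
which gives \eqref{eq:strongSylvester} by density of $\Dom(B_1)$ in $\cH_1$. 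For uniqueness, if $Z\in\cL(\cH_0,\cH_1)$ solves the homogeneous equation strongly, then for $g\in\Dom(B_0)$ the map $t\mapsto\ee^{\ii tB_1}Z\ee^{-\ii tB_0}g$ has vanishing derivative (apply the homogeneous equation to $\ee^{-\ii tB_0}g\in\Dom(B_0)$) and is therefore constant equal to $Zg$; thus $Z\ee^{-\ii tB_0}=\ee^{-\ii tB_1}Z$ on all of $\cH_0$, which propagates via the spectral theorem to $Z\EE_{B_0}(\Delta)=\EE_{B_1}(\Delta)Z$ for every Borel set $\Delta\subset\R$, and the choice $\Delta=\spec(B_0)$ yields $Z=\EE_{B_1}(\spec(B_0))Z=0$ because $\spec(B_0)$ and $\spec(B_1)$ are disjoint.

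It remains to pin down the constant. Combining $\norm{Y}\le\norm{f_d}_{L^1(\R)}\,\norm{T}=\norm{g}_{L^1(\R)}\,\norm{T}/d$ with the freedom to choose $g$ gives \eqref{eq:normBoundSylv} with $c$ equal to the infimum in \eqref{eq:Nagy}; the evaluation $c=\pi/2$ is the Sz.-Nagy--Strausz extremal problem, and the assertion that $\pi/2$ cannot be lowered in \eqref{eq:normBoundSylv} is McEachin's explicit family of triples $B_0,B_1,T$ with $d\,\norm{Y}/\norm{T}$ arbitrarily close to $\pi/2$. I would quote these last two facts rather than reprove them. The main obstacle is not the formal mechanism -- the heuristic $\tfrac{\dd}{\dd t}\bigl(\ee^{\ii tB_1}T\ee^{-\ii tB_0}\bigr)=\ii\,\ee^{\ii tB_1}(B_1T-TB_0)\ee^{-\ii tB_0}$ makes everything transparent -- but rather making the domain inclusion $\Ran(Y|_{\Dom(B_0)})\subset\Dom(B_1)$ and the interchanges of integration with the spectral calculus fully rigorous when $B_0$ and $B_1$ are unbounded, together with the two genuinely nontrivial sharpness statements. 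This is exactly why the result is recorded here as a combination of references rather than proved from scratch.
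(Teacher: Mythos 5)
The paper offers no self-contained argument for this theorem: its proof is a pointer to \cite[Theorem 2.7]{AMM03} and \cite[Lemma 4.2]{AM11}, with Sz.-Nagy--Strausz quoted for the value $\tfrac{\pi}{2}$ and McEachin for sharpness. Your sketch is a correct reconstruction of essentially the argument those references carry out: the representation of $Y$ through the unitary groups $\ee^{\ii tB_1}$, $\ee^{-\ii tB_0}$ and an $L^1$ function $f_d$, the verification that this $Y$ is a strong solution, uniqueness via the intertwining $Z\ee^{-\ii tB_0}=\ee^{-\ii tB_1}Z$ combined with $\EE_{B_1}\bigl(\spec(B_0)\bigr)=0$, the rescaling $f_d(t)=g(dt)$ to reduce to $d=1$, and the two quoted extremal facts \cite{SzNagy53}, \cite{McEachin92}. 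So you are on the same route as the sources the paper invokes, merely reproducing part of what the paper outsources. The one step in your write-up that is not yet a proof is the passage to the frequency-domain double integral: $(\mu,\nu)\mapsto\langle h,\EE_{B_1}(\mu)T\EE_{B_0}(\nu)g\rangle$ is in general only a bimeasure, not of bounded variation, so the Fubini-type replacement of $\hat{f_d}(\nu-\mu)$ by $(\nu-\mu)^{-1}$, and the ensuing estimate giving $\Ran(Y|_{\Dom(B_0)})\subset\Dom(B_1)$, require the operator Stieltjes integral machinery -- which is precisely the content of \cite[Lemma 4.2]{AM11} and \cite[Theorem 2.7]{AMM03}. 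You flag this obstacle yourself; either delegating it as you propose, or verifying the strong Sylvester identity directly in the time domain (differentiate $t\mapsto\langle h,\ee^{\ii tB_1}T\ee^{-\ii tB_0}g\rangle$ for $g\in\Dom(B_0)$, $h\in\Dom(B_1)$ and integrate by parts against $f_d$, using the hypothesis on $\hat{f_d}$), closes the gap, so no essential idea is missing.
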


\begin{remark}\label{rem:Sylvester}
 A statement analogous to Theorem \ref{thm:Sylvester} holds if the operators $B_0$ and $B_1$ are assumed to be just normal and
 their spectra are separated as in \eqref{eq:SylvSpecSep}. In this case, the solution to \eqref{eq:defSylEq} admits a
 representation similar to \eqref{eq:SylSol}, and the constant $c$ in \eqref{eq:normBoundSylv} has to be replaced by some constant
 less than $2{.}91$, see \cite{BDK89} and \cite[Theorem 4.2]{BDM83}. Note that the exact value of the optimal constant here is
 still unknown.

 However, in some cases a better constant is available. If, for example, the spectra of $B_0$ and $B_1$ are additionally assumed to
 be subordinated in the sense that they are contained in half planes $\Pi_0$ and $\Pi_1$, respectively, such that
 $\dist(\Pi_0,\Pi_1)\ge d$, or if one of the two sets is contained in a disk of finite radius with distance at least $d$ from the
 other one, then the constant in the bound \eqref{eq:normBoundSylv} can be replaced by $1$. This follows from corresponding
 representation formulae for the solution, see \cite[Theorem 3.3]{BDM83} and \cite[Theorem 9.1]{BR97}; cf.\ also
 \cite[Theorem 3.1]{BDM83}. Other improvements on the constant may be available for small dimensions of the underlying Hilbert
 spaces, at least in the case where $B_0$ and $B_1$ are assumed to be self-adjoint, see \cite{McEachin94}.
\end{remark}

%%%%%%%%%%%%%%%%%%%%%%%%%%%%%%%%%%%%%%%%%%%%%%%%%%%%%%%%%%%%%%%%%%%%%%%%%%%%%%%%%%%%%%%%%%%%%%%%%%%%%%%%%%%%%%%%%%%%%%%%%%%%%%%%%%%
%%% sin2\theta theorem
%%%%%%%%%%%%%%%%%%%%%%%%%%%%%%%%%%%%%%%%%%%%%%%%%%%%%%%%%%%%%%%%%%%%%%%%%%%%%%%%%%%%%%%%%%%%%%%%%%%%%%%%%%%%%%%%%%%%%%%%%%%%%%%%%%%
We now give a straightforward proof of the $\sin2\theta$ estimate \eqref{eq:introSin2theta}. One immediately visible difference to
the proof in \cite{AM12:2} is that the proof given below is direct and is not deduced from the corresponding a posteriori estimate.
In addition, the key idea of the argument presented here can easily be reduced to one single equation, namely equation
\eqref{eq:strongRiccatiAsSyl} below, which makes this proof very transparent.

\begin{proposition}[{\cite[Corollary 4.3]{AM12:2}}]\label{prop:sin2theta}
Let $A$, $V$, and $Q$ be as in Theorem \ref{thm:intro}. Then
\[
 \sin2\theta \le \frac{\pi}{2}\cdot 2\,\frac{\norm{V}}{d}\,,
\]
where $\theta:=\arcsin\bigl(\norm{\EE_A(\sigma)-Q}\bigr)$ denotes the maximal angle between the subspaces $\Ran\EE_A(\sigma)$ and
$\Ran Q$.

\begin{proof}
 The case $\theta=\frac{\pi}{2}$ is obvious. Assume that $\theta<\frac{\pi}{2}$, that is,
 \begin{equation}\label{eq:acuteAngleCase}
  \norm{\EE_A(\sigma)-Q}<1\,.
 \end{equation}
 Denote $\cH_0:=\Ran\EE_A(\sigma)$ and $\cH_1:=\cH_0^\perp=\Ran\EE_A(\Sigma)$, and let
 \[
  V = \begin{pmatrix} V_0 & W\\ W^* & V_1 \end{pmatrix} \quad\text{and }\quad A = \begin{pmatrix} A_0 & 0\\ 0 & A_1\end{pmatrix}
 \]
 with $\Dom(A)=\Dom(A_0)\oplus\Dom(A_1)$ be the representations of $V$ and $A$ as $2\times 2$ block operator matrices with respect
 to the decomposition $\cH=\cH_0\oplus\cH_1$.
   
 It is well known (see, e.g., \cite[Corollary 3.4 (i)]{KMM03:2}) that under the condition \eqref{eq:acuteAngleCase} there is a
 unique operator $X\in\cL(\cH_0,\cH_1)$ such that the range of $Q$ is the graph of $X$, that is,
 $\Ran Q=\{x\oplus Xx\mid x\in \cH_0\}$. This operator $X$ satisfies
 \begin{equation}\label{eq:theta}
  \arctan(\norm{X}) = \arcsin\bigl(\norm{\EE_A(\sigma)-Q}\bigr) = \theta\,.
 \end{equation}
 Moreover, the operator $U\in\cL(\cH)$ given by
 \[
  U = \begin{pmatrix}
       \left(I_{\cH_0}+X^*X\right)^{-1/2} & -X^*\left(I_{\cH_1}+XX^*\right)^{-1/2}\\
       X\left(I_{\cH_0}+X^*X\right)^{-1/2} & \left(I_{\cH_1}+XX^*\right)^{-1/2}
      \end{pmatrix}
 \]
 is unitary and satisfies $U^*\EE_{A+V}\bigl(\cO_{d/2}(\sigma)\bigr)U=\EE_A(\sigma)$, cf.\ \cite[Remark 3.6]{KMM03:2}.

 Considering $\Dom(A_0+V_0)=\Dom(A_0)$, $\Dom(A_1+V_1)=\Dom(A_1)$, and
 \[
  A+V = \begin{pmatrix} A_0+V_0 & 0\\ 0 & A_1+V_1 \end{pmatrix} + \begin{pmatrix} 0 & W\\ W^* & 0 \end{pmatrix}\,,
 \]
 it follows from \cite[Theorem 4.1]{MSS13} (see also \cite[Lemma 5.3]{AMM03}) that $X$ is a \emph{strong solution to the operator
 Riccati equation}
 \[
  X(A_0+V_0)-(A_1+V_1)X+XWX-W^*=0\,,
 \]
 that is,
 \[
  \Ran\bigl(X|_{\Dom(A_0)}\bigr)\subset \Dom(A_1)
 \]
 and
 \begin{equation}\label{eq:strongRiccati}
  X(A_0+V_0)g - (A_1+V_1)Xg + XWXg - W^*g = 0 \quad\text{for}\quad g\in\Dom(A_0)\,.
 \end{equation}

 Define $H\in\cL(\cH)$ by
 \[
  H := \begin{pmatrix} \left(I_{\cH_0}+X^*X\right)^{-1/2} & 0\\ 0 & \left(I_{\cH_1}+XX^*\right)^{-1/2}\end{pmatrix}\,.
 \]
 A straightforward calculation shows that
 \begin{equation}\label{eq:VunitaryTransf}
  U^*VU = H \begin{pmatrix} * & *\\ V_1X - XV_0 - XWX + W^* & * \end{pmatrix} H\,.
 \end{equation}
  
 Denote $P:=\EE_A(\sigma)$. Equations \eqref{eq:strongRiccati} and \eqref{eq:VunitaryTransf} then imply that
 \begin{equation}\label{eq:strongRiccatiAsSyl}
  \begin{split}
   XA_0g - A_1&Xg = V_1Xg - XV_0g - XWXg + W^*g\\
            &= \left(I_{\cH_1}+XX^*\right)^{1/2} \bigl(P^\perp U^*VUP|_{\cH_0}\bigr) \left(I_{\cH_0}+X^*X\right)^{1/2}g
  \end{split}
 \end{equation}
 for $g\in\Dom(A_0)$, where the restriction $P^\perp U^*VUP|_{\cH_0}$ is considered as an operator from $\cH_0$ to $\cH_1$.
 Comparing equation \eqref{eq:strongRiccatiAsSyl} with the Sylvester equation \eqref{eq:strongSylvester}, it follows from the bound
 in Theorem \ref{thm:Sylvester} given by \eqref{eq:normBoundSylv} and \eqref{eq:Nagy} that
 \[
  \norm{X} \le \frac{\pi}{2}\,\bigl(1+\norm{X}^2\bigr) \frac{\norm{P^\perp U^*VUP}}{d}
  \le \frac{\pi}{2}\,\bigl(1+\norm{X}^2\bigr)\frac{\norm{V}}{d}\,.
 \]
 Since $2\norm{X}/(1+\norm{X}^2)=2\tan\theta/(1+\tan^2\theta)=\sin(2\theta)$ by \eqref{eq:theta}, this proves the claim.
\end{proof}%
\end{proposition}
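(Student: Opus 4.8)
The plan is to reduce the Riccati equation satisfied by the graph operator $X$ to a Sylvester equation to which Theorem~\ref{thm:Sylvester} applies, thereby bounding $\norm{X}$ and hence $\sin2\theta$. First I would dispose of the trivial case $\theta=\frac{\pi}{2}$, where $\sin2\theta=0$ and the inequality is obvious, and henceforth assume $\norm{\EE_A(\sigma)-Q}<1$. In this \emph{acute} case one may invoke the standard fact (e.g.\ \cite[Corollary~3.4(i)]{KMM03:2}) that $\Ran Q$ is the graph of a unique $X\in\cL(\cH_0,\cH_1)$, where $\cH_0=\Ran\EE_A(\sigma)$ and $\cH_1=\cH_0^\perp$, and that $\arctan\norm{X}=\arcsin\norm{\EE_A(\sigma)-Q}=\theta$; the unitary $U$ built from $X$ via the polar-type formula diagonalizes $Q$, i.e.\ $U^*QU=\EE_A(\sigma)$.

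Next I would extract from the reducing property of $\Ran Q$ for $A+V$ the Riccati equation for $X$. Writing $A=\mathrm{diag}(A_0,A_1)$ and $V=\begin{pmatrix}V_0&W\\W^*&V_1\end{pmatrix}$ with respect to $\cH=\cH_0\oplus\cH_1$, the invariance of the graph of $X$ under $A+V$ yields, via \cite[Theorem~4.1]{MSS13} (or \cite[Lemma~5.3]{AMM03}), that $X$ is a strong solution of
\[
 X(A_0+V_0)-(A_1+V_1)X+XWX-W^*=0\,,
\]
with the domain inclusion $\Ran(X|_{\Dom(A_0)})\subset\Dom(A_1)$. The heart of the argument is then a single algebraic identity: rearranging the Riccati equation gives $XA_0g-A_1Xg=V_1Xg-XV_0g-XWXg+W^*g$ for $g\in\Dom(A_0)$, and a direct (if slightly tedious) block computation of $U^*VU$ shows that the right-hand side equals $(I_{\cH_1}+XX^*)^{1/2}\,(P^\perp U^*VUP|_{\cH_0})\,(I_{\cH_0}+X^*X)^{1/2}g$, where $P:=\EE_A(\sigma)$. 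This is equation~\eqref{eq:strongRiccatiAsSyl}, and it exhibits $X$ as the strong solution of a Sylvester equation $XA_0-A_1X=T$ with $A_0,A_1$ self-adjoint and $\dist(\spec A_0,\spec A_1)=d>0$.

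Finally I would apply Theorem~\ref{thm:Sylvester}: since the spectra of $A_0$ and $A_1$ are separated by $d$, the norm bound $\norm{X}\le\frac{\pi}{2}\,\norm{T}/d$ holds with $T$ the right-hand side of \eqref{eq:strongRiccatiAsSyl}. Estimating $\norm{T}\le(1+\norm{X}^2)\,\norm{P^\perp U^*VUP}\le(1+\norm{X}^2)\norm{V}$, using $\norm{(I+X^*X)^{1/2}}=(1+\norm{X}^2)^{1/2}$ and the same for $XX^*$, together with the unitarity of $U$, gives $\norm{X}\le\frac{\pi}{2}(1+\norm{X}^2)\norm{V}/d$. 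Since $\arctan\norm{X}=\theta$, we have $2\norm{X}/(1+\norm{X}^2)=2\tan\theta/(1+\tan^2\theta)=\sin2\theta$, and dividing through yields $\sin2\theta\le\frac{\pi}{2}\cdot2\norm{V}/d$. The main obstacle is the block-matrix computation verifying \eqref{eq:strongRiccatiAsSyl}: one must carefully track how $U^*VU$ and the conjugating factors $(I+X^*X)^{1/2}$, $(I+XX^*)^{1/2}$ combine, and confirm that the intertwining relation $X(I+X^*X)^{1/2}=(I+XX^*)^{1/2}X$ makes the $(1,0)$-block come out exactly as $V_1X-XV_0-XWX+W^*$ sandwiched between the square-root factors. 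Everything else is either standard structure theory of the operator angle or a direct invocation of the sharp Sz.-Nagy/McEachin constant $\frac{\pi}{2}$ from Theorem~\ref{thm:Sylvester}.
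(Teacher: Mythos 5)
Your proposal is correct and follows essentially the same route as the paper: graph representation of $\Ran Q$ via $X$ with $\arctan\norm{X}=\theta$, the strong Riccati equation from \cite[Theorem 4.1]{MSS13}, the key rewriting of its right-hand side as $(I_{\cH_1}+XX^*)^{1/2}\bigl(P^\perp U^*VUP|_{\cH_0}\bigr)(I_{\cH_0}+X^*X)^{1/2}$, and the Sylvester bound with the sharp constant $\frac{\pi}{2}$ followed by $2\tan\theta/(1+\tan^2\theta)=\sin2\theta$. The only cosmetic difference is that you state $U^*QU=\EE_A(\sigma)$ rather than the paper's formulation, which does not affect the argument.
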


%%%%%%%%%%%%%%%%%%%%%%%%%%%%%%%%%%%%%%%%%%%%%%%%%%%%%%%%%%%%%%%%%%%%%%%%%%%%%%%%%%%%%%%%%%%%%%%%%%%%%%%%%%%%%%%%%%%%%%%%%%%%%%%%%%%
%%%%%%%%%%%%%%%%%%%%%%%%%%%%%%%%%%%%%%%%%%%%%%%%%%%%%%%%%%%%%%%%%%%%%%%%%%%%%%%%%%%%%%%%%%%%%%%%%%%%%%%%%%%%%%%%%%%%%%%%%%%%%%%%%%%
%%% Section: Symmetrically-normed ideals
%%%%%%%%%%%%%%%%%%%%%%%%%%%%%%%%%%%%%%%%%%%%%%%%%%%%%%%%%%%%%%%%%%%%%%%%%%%%%%%%%%%%%%%%%%%%%%%%%%%%%%%%%%%%%%%%%%%%%%%%%%%%%%%%%%%
%%%%%%%%%%%%%%%%%%%%%%%%%%%%%%%%%%%%%%%%%%%%%%%%%%%%%%%%%%%%%%%%%%%%%%%%%%%%%%%%%%%%%%%%%%%%%%%%%%%%%%%%%%%%%%%%%%%%%%%%%%%%%%%%%%%
\section{Symmetrically-normed ideals}\label{sec:snIdeals}

In this section, we extend Theorem \ref{thm:intro} to symmetrically-normed ideals of the algebra of bounded operators. In the
presentation of the concept of symmetrically-normed ideals we mainly follow \cite[Chapter III]{GK69}.

%%%%%%%%%%%%%%%%%%%%%%%%%%%%%%%%%%%%%%%%%%%%%%%%%%%%%%%%%%%%%%%%%%%%%%%%%%%%%%%%%%%%%%%%%%%%%%%%%%%%%%%%%%%%%%%%%%%%%%%%%%%%%%%%%%%
%%% Symmetrically-normed ideals
%%%%%%%%%%%%%%%%%%%%%%%%%%%%%%%%%%%%%%%%%%%%%%%%%%%%%%%%%%%%%%%%%%%%%%%%%%%%%%%%%%%%%%%%%%%%%%%%%%%%%%%%%%%%%%%%%%%%%%%%%%%%%%%%%%%
Let $\cH$ be a separable Hilbert space. Recall that a non-zero subspace $\fS\subset\cL(\cH)$ is called a \emph{two-sided ideal of
$\cL(\cH)$} if for every $T\in\fS$ and every choice of operators $K,T\in\cL(\cH)$ one has $KTL\in\fS$. It is well-known (see, e.g.,
\cite[Theorem III.1.1]{GK69}) that every two-sided ideal $\fS\subset\cL(\cH)$ contains the operators of finite rank and that either
$\fS=\cL(\cH)$ or $\fS\subset\fS_\infty$, where $\fS_\infty:=\fS_\infty(\cH)$ denotes the two-sided ideal of compact operators in
$\cL(\cH)$. At this point it should be emphasized that we allow for the case $\fS=\cL(\cH)$.

A norm $\tnorm{\cdot}$ on a two-sided ideal $\fS\subset\cL(\cH)$ is called \emph{symmetric} if it has the following properties:
\begin{enumerate}
 \renewcommand{\theenumi}{\roman{enumi}}
 \item $\tnorm{KTL}\le\norm{K}\,\tnorm{T}\,\norm{L}$ for $T\in\fS$ and $K,L\in\cL(\cH)$.
 \item $\tnorm{T}=\norm{T}$ if $T\in\fS$ has rank $1$.
\end{enumerate}
The ideal $\fS$ is called a \emph{symmetrically-normed ideal} if there is a symmetric norm $\tnorm{\cdot}$ on $\fS$ such that
$(\fS,\tnorm{\cdot})$ is complete.

Clearly, every symmetric norm $\tnorm{\cdot}$ on a two-sided ideal $\fS$ is \emph{unitary-invariant}, that is, for every unitary
operator $U\in\cL(\cH)$ one has
\[
 \tnorm{UT} = \tnorm{TU} = \tnorm{T}\,,\quad T\in\fS\,.
\]
Moreover, it follows by polar decomposition that for every $T\in\fS$ the operators $\abs{T}=\sqrt{T^*T}$, $T^*$, and $\abs{T^*}$
also belong to $\fS$ and
\begin{equation}\label{eq:snNormPolar}
 \tnorm{T} = \tnorm{\,\abs{T}\,} = \tnorm{T^*} = \tnorm{\,\abs{T^*}\,}\,.
\end{equation}

Examples for symmetric norms on every two-sided ideal $\fS$ are the \emph{Ky Fan norms} $\tnorm{\cdot}_n$, $n\in\N$, which are
defined as the sum of the first $n$ singular values, that is,
\begin{equation}\label{eq:defKyFan}
 \tnorm{T}_n := s_1(T) + \dots + s_n(T)\,,\quad T\in\cL(\cH)\,.
\end{equation}
Recall (see, e.g., \cite[Theorem II.7.1]{GK69}) that the $n$-th singular value of $T\in\cL(\cH)$ can be introduced as
\begin{equation}\label{eq:defSingularVal}
 s_n(T) := \inf\{ \norm{T-F} \mid F\in\cL(\cH)\,,\ \dim\Ran F < n\}\,,\quad n\in\N\,.
\end{equation}
In particular, one has $s_1(T)=\norm{T}$, and the sequence $(s_n(T))_n$ is non-increasing, hence convergent. Moreover,
$T\in\cL(\cH)$ is compact if and only if $(s_n(T))_n$ converges to zero, see \cite[Corollary II.7.1]{GK69}. In this case,
\eqref{eq:defSingularVal} agrees with the usual notion of singular values of compact operators, see \cite[Theorem II.2.1]{GK69}.

It follows from the discussion in \cite[Section II.7]{GK69} that the Ky Fan norms \eqref{eq:defKyFan} are indeed symmetric norms
and that each $\tnorm{\cdot}_n$ can be represented as
\begin{equation}\label{eq:KyFanSup}
 \tnorm{T}_n = \sup \biggl| \sum_{j=1}^n \langle y_j,Tx_j\rangle\biggr|\,,\quad T\in\cL(\cH)\,,
\end{equation}
where the supremum is taken over all orthonormal systems $\{x_j\}_{j=1}^n$ and $\{y_j\}_{j=1}^n$ in $\cH$, cf.\ also
\cite[Lemma II.4.1]{GK69}.

The Ky Fan norms play a very distinguished role in our considerations:
For a symmetrically-normed ideal $\fS$ with norm $\tnorm{\cdot}$, we say that $(\fS,\tnorm{\cdot})$ admits \emph{Ky Fan's dominance
theorem} if for $T\in\cL(\cH)$ and $S\in\fS$ with
\begin{equation}\label{eq:KyFanDom}
 \tnorm{T}_n\le\tnorm{S}_n \quad\text{ for all }\quad n\in\N
\end{equation}
one has $T\in\fS$ and $\tnorm{T}\le\tnorm{S}$. Note that \eqref{eq:KyFanDom} implies that $T$ is compact if $S$ is compact. Indeed,
in case of \eqref{eq:KyFanDom}, $(s_n(T))_n$ converges to zero if $(s_n(S))_n$ does.

It is shown in \cite{AA12} that every symmetric norm $\tnorm{\cdot}$ on $\fS=\cL(\cH)$ is equivalent to the usual bound norm on
$\cL(\cH)$ and that $(\cL(\cH),\tnorm{\cdot})$ admits Ky Fan's dominance theorem in the above sense.

In case of $\fS\subset\fS_\infty$, it is well-known that $(\fS,\tnorm{\cdot})$ admits Ky Fan's dominance theorem if
$(\fS,\tnorm{\cdot})$ is generated by a \emph{symmetric norming function}, see \cite[Section III.4]{GK69}. If
$\fS\subsetneq\fS_\infty$, this is the case if and only if for $(T_n)_n\subset\fS$ with $\sup_{n\in\N}\tnorm{T_n}<\infty$ and
$T_n\to T\in\cL(\cH)$ in the weak operator topology one has $T\in\fS$ and $\tnorm{T}\le\sup_{n\in\N}\tnorm{T_n}$,
cf.\ \cite[Theorems III.5.1 and III.5.2]{GK69}. This latter characterization has been used, for instance, in \cite{AMM03},
\cite{BDM83}, and \cite{McEachin93}.

Well-known examples of symmetrically-normed ideals satisfying Ky Fan's dominance theorem are the standard Schatten classes $\fS_p$
for $1\le p\le \infty$, see \cite[Section III.7]{GK69}.

%%%%%%%%%%%%%%%%%%%%%%%%%%%%%%%%%%%%%%%%%%%%%%%%%%%%%%%%%%%%%%%%%%%%%%%%%%%%%%%%%%%%%%%%%%%%%%%%%%%%%%%%%%%%%%%%%%%%%%%%%%%%%%%%%%%
%%% Symmetric sin\Theta theorem for symmetrically-normed ideals
%%%%%%%%%%%%%%%%%%%%%%%%%%%%%%%%%%%%%%%%%%%%%%%%%%%%%%%%%%%%%%%%%%%%%%%%%%%%%%%%%%%%%%%%%%%%%%%%%%%%%%%%%%%%%%%%%%%%%%%%%%%%%%%%%%%
In order to extend Theorem \ref{thm:intro} to symmetrically-normed ideals, one requires a suitable extension of the symmetric
$\sin\Theta$ theorem. A corresponding variant of Proposition \ref{prop:sinTheta0} is known in principle (see \cite{BDM83} and
\cite{McEachin93}), whereas the identity \eqref{eq:projDiffNormMax} does not hold for arbitrary norms. Nevertheless, one can
proceed as in \cite{DK70} and use \cite[Lemmas 6.1 and 6.2]{DK70} instead of \eqref{eq:projDiffNormMax}.

In the present work, we choose a direct way to extend the symmetric $\sin\Theta$ theorem. This approach, however, utilizes the
connection to the operator Sylvester equation just as well, so that Theorem \ref{thm:Sylvester} plays a crucial role in the our
reasoning too.

In contrast to the formulation of Proposition \ref{prop:symmSinTheta} and the hypotheses in \cite{BDM83} and \cite{McEachin93}, we
do not restrict ourselves to the case of spectral projections here. In this regard, let us recall that if $\cU$ is a reducing
subspace for a linear operator $B$, then the restrictions $B|_{\Dom(B)\cap\cU}$ and $B|_{\Dom(B)\cap\cU^\perp}$ are called the
\emph{parts of $B$} associated with $\cU$ and $\cU^\perp$, respectively. It is well-known that these parts are self-adjoint if $B$
is self-adjoint.

\begin{proposition}[The symmetric $\sin\Theta$ theorem for symmetrically-normed ideals]\label{prop:symmSinThetasnIdeals}
 Let $A$ be a self-adjoint operator on a separable Hilbert space $\cH$, let $V\in\cL(\cH)$ be self-adjoint, and suppose that $P$
 and $Q$ are orthogonal projections onto reducing subspaces for $A$ and $A+V$, respectively. Let $A_0$ and $A_1$ denote the parts
 of $A$ associated with $\Ran P$ and $\Ran P^\perp$, respectively, and let $\Lambda_0$ and $\Lambda_1$ likewise be the parts of
 $A+V$ associated with $\Ran Q$ and $\Ran Q^\perp$.

 Assume that there is $d>0$ such that
 \begin{equation}\label{eq:symmSinThetaSpecSep}
  \dist\bigl(\spec(A_0),\spec(\Lambda_1)\bigr) \ge d \quad\text{ and }\quad \dist\bigl(\spec(A_1),\spec(\Lambda_0)\bigr) \ge d\,.
 \end{equation}
 If $V\in\fS$ for some symmetrically-normed ideal $\fS$ with norm $\tnorm{\cdot}$ such that $(\fS,\tnorm{\cdot})$ admits Ky Fan's
 dominance theorem, then one has $\sin\Theta=\abs{P-Q}\in\fS$ and
 \[
  \tnorm{\sin\Theta} = \tnorm{P-Q}\le \frac{\pi}{2}\,\frac{\tnorm{V}}{d}\,,
 \]
 where $\Theta=\Theta(P,Q)$ is the operator angle associated with $\Ran P$ and $\Ran Q$.
 \begin{proof}
  Denote
  \[
   X:=P-Q=PQ^\perp-P^\perp Q \quad\text{ and }\quad T:=PVQ^\perp-P^\perp VQ\,.
  \]

  We show that the operator $X$ satisfies
  \begin{equation}\label{eq:SylSolRepr}
   \langle y, Xx \rangle = \int_\R \langle y, \ee^{\ii t A}T \ee^{-\ii t(A+V)}x \rangle f_d(t)\,\dd t \quad\text{ for }\quad
   x,y \in\cH\,,
  \end{equation}
  where $f_d\in L^1(\R)$ is any function as in Theorem \ref{thm:Sylvester}.

  Since $\Ran P$ is reducing for $A$, the projection $P$ commutes with $A$, that is, $Px\in\Dom(A)$ and $PAx=APx$ for all
  $x\in\Dom(A)=\Dom(A+V)$, see, e.g., \cite[Section III.5.6]{Kato66}. Analogously, $Q^\perp$ commutes with $A+V$. Hence, one has
  \[
   \Ran\bigl(PQ^\perp|_{\Dom(A)}\bigr)\subset \Dom(A)\cap\Ran P
  \]
  and
  \[
   PQ^\perp(A+V)x - APQ^\perp x = P(A+V)Q^\perp x - PAQ^\perp x = PVQ^\perp x
  \]
  for all $x\in\Dom(A)$, that is, the operator $Y:=PQ^\perp$ is a strong solution to the operator Sylvester equation
  $Y(A+V)-AY=PVQ^\perp$; cf.\ \cite[Section 2]{BDM83} and \cite[Section 2]{McEachin93}, and also the proof of
  \cite[Proposition 3.4]{AM12:2}.

  Since by \eqref{eq:symmSinThetaSpecSep} the spectra of the parts $A_0$ and $\Lambda_1$ are separated with distance at least $d$,
  it follows from Theorem \ref{thm:Sylvester} that
  \begin{equation}\label{eq:SylSolWeakPQp}
   \langle y,PQ^\perp x \rangle = \int_\R \bigl\langle y,\ee^{\ii t A}PVQ^\perp \ee^{-\ii t(A+V)}x \bigr\rangle f_d(t)\,\dd t
   \quad\text{ for }\quad x,y\in\cH\,.
  \end{equation}
  Indeed, by functional calculus the subspaces $\Ran P$ and $\Ran Q^\perp$ are reducing for $\ee^{\ii tA}$ and $\ee^{-\ii t(A+V)}$,
  respectively, and the associated parts are given by $\ee^{\ii tA_0}$ and $\ee^{-\ii t\Lambda_1}$. Hence, for $x\in\Ran Q^\perp$
  and $y\in\Ran P$ equation \eqref{eq:SylSolWeakPQp} agrees with \eqref{eq:SylSol}, and for $x\in\Ran Q$ or $y\in\Ran P^\perp$
  equation \eqref{eq:SylSolWeakPQp} holds since both sides of \eqref{eq:SylSolWeakPQp} are trivial in this case.

  Since the spectra of $A_1$ and $\Lambda_0$ are likewise separated with distance at least $d$, the analogous reasoning shows that
  \begin{equation}\label{eq:SylSolWeakPpQ}
   \langle y,P^\perp Qx \rangle = \int_\R \bigl\langle y,\ee^{\ii t A}P^\perp VQ \ee^{-\ii t(A+V)}x \bigr\rangle f_d(t)\,\dd t
   \quad\text{ for }\quad x,y\in\cH\,.
  \end{equation}
  Combining \eqref{eq:SylSolWeakPQp} and \eqref{eq:SylSolWeakPpQ} yields \eqref{eq:SylSolRepr}.

  Taking into account representation \eqref{eq:KyFanSup} and the fact that every Ky Fan norm is unitary invariant, it follows from
  \eqref{eq:SylSolRepr} that for arbitrary orthonormal systems $\{x_j\}_{j=1}^n$ and $\{y_j\}_{j=1}^n$ in $\cH$, $n\in\N$, one has
  \[
   \begin{aligned}
    \biggl|\sum_{j=1}^n\langle y_j,Xx_j\rangle\biggr|
    &\le \int_\R \biggl|\sum_{j=1}^n\langle y_j,\ee^{\ii tA}T\ee^{-\ii t(A+V)}x_j\rangle\biggr|\,\abs{f_d(t)}\,\dd t\\
    &\le \int_\R \tnorm{\ee^{\ii tA}T\ee^{-\ii t(A+V)}}_n\,\abs{f_d(t)}\,\dd t = \norm{f_d}_{L^1(\R)}\cdot\tnorm{T}_n\,.
   \end{aligned}
  \]
  Again by \eqref{eq:KyFanSup}, this implies that $\tnorm{X}_n\le\norm{f_d}_{L^1(\R)}\cdot\tnorm{T}_n$ and hence, in view of the
  identity \eqref{eq:Nagy}, that $\tnorm{X}_n\le\frac{\pi}{2d}\cdot\tnorm{T}_n$ for all $n\in\N$. Since $(\fS,\tnorm{\cdot})$
  admits Ky Fan's dominance theorem, one concludes that $X=P-Q\in\fS$ and
  \[
   \tnorm{P-Q} \le \frac{\pi}{2}\,\frac{\tnorm{T}}{d}\,.
  \]

  Due to the fact that $\sin\Theta=\abs{P-Q}\in\fS$ and $\tnorm{\sin\Theta}=\tnorm{P-Q}$ by \eqref{eq:sinTheta} and
  \eqref{eq:snNormPolar}, it remains to show that
  \[
   \tnorm{T} \le \tnorm{V}\,.
  \]
  Indeed, one observes that (cf.\ \cite[Lemma 6.2]{DK70})
  \[
   (P-P^\perp)V - V(Q-Q^\perp) = 2PVQ^\perp - 2P^\perp VQ = 2T\,,
  \]
  and, therefore, $2\tnorm{T} \le 2\tnorm{V}$ since $P-P^\perp$ and $Q-Q^\perp$ are unitary.
 \end{proof}%
\end{proposition}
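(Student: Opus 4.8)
The plan is to exploit the standard link between the projection difference $P-Q$ and the operator Sylvester equation, pass from the integral representation of the Sylvester solution supplied by Theorem~\ref{thm:Sylvester} to Ky Fan norm estimates, and then invoke Ky Fan's dominance theorem. First I would set $X:=P-Q$ and use \eqref{eq:projDiff} to write $X=PQ^\perp-P^\perp Q$. Since $\Ran P$ reduces $A$, the projection $P$ commutes with $A$; since $\Ran Q^\perp$ reduces $A+V$, the projection $Q^\perp$ commutes with $A+V$. A direct computation on $\Dom(A)=\Dom(A+V)$ then gives $PQ^\perp(A+V)-A\,PQ^\perp=PVQ^\perp$, so $PQ^\perp$ is the strong solution of the Sylvester equation $Y(A+V)-AY=PVQ^\perp$. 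Restricted to the relevant reducing subspaces, this equation is governed by the part $\Lambda_1$ of $A+V$ and the part $A_0$ of $A$, whose spectra are separated by $d$ according to \eqref{eq:symmSinThetaSpecSep}; hence Theorem~\ref{thm:Sylvester} applies and, after extending by triviality to the orthogonal complements, yields the weak identity
\[
 \langle y,PQ^\perp x\rangle=\int_\R\langle y,\ee^{\ii tA}PVQ^\perp\ee^{-\ii t(A+V)}x\rangle\,f_d(t)\,\dd t,\qquad x,y\in\cH,
\]
for a suitable $f_d\in L^1(\R)$ as in Theorem~\ref{thm:Sylvester}. The analogous argument with the parts $\Lambda_0$ and $A_1$ treats $P^\perp Q$, and adding the two identities gives, with $T:=PVQ^\perp-P^\perp VQ$,
\[
 \langle y,Xx\rangle=\int_\R\langle y,\ee^{\ii tA}T\ee^{-\ii t(A+V)}x\rangle\,f_d(t)\,\dd t,\qquad x,y\in\cH.
\]

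Next I would turn this into a Ky Fan norm estimate. For any orthonormal systems $\{x_j\}_{j=1}^n$ and $\{y_j\}_{j=1}^n$ in $\cH$, moving the finite sum inside the integral, applying the triangle inequality for the integral, then \eqref{eq:KyFanSup}, and finally the invariance of every Ky Fan norm under the unitaries $\ee^{\ii tA}$ and $\ee^{-\ii t(A+V)}$ gives
\[
 \Bigl|\sum_{j=1}^n\langle y_j,Xx_j\rangle\Bigr|\le\int_\R\tnorm{\ee^{\ii tA}T\ee^{-\ii t(A+V)}}_n\,\abs{f_d(t)}\,\dd t=\norm{f_d}_{L^1(\R)}\cdot\tnorm{T}_n,
\]
whence $\tnorm{X}_n\le\norm{f_d}_{L^1(\R)}\cdot\tnorm{T}_n$ for all $n$; minimizing over the admissible $f_d$ and rescaling by $d$, the identity \eqref{eq:Nagy} yields $\tnorm{X}_n\le\frac{\pi}{2d}\,\tnorm{T}_n$ for every $n\in\N$. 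Since $V\in\fS$ and $\fS$ is a two-sided ideal, $T\in\fS$, so Ky Fan's dominance theorem gives $X=P-Q\in\fS$ together with $\tnorm{P-Q}\le\frac{\pi}{2}\,\tnorm{T}/d$. To bound $\tnorm{T}$, I would observe that expanding $(P-P^\perp)V-V(Q-Q^\perp)$ with $P+P^\perp=Q+Q^\perp=I_{\cH}$ yields exactly $2T$; since $P-P^\perp$ and $Q-Q^\perp$ are unitary, property (i) of a symmetric norm gives $\tnorm{T}\le\tnorm{V}$. Finally, $\sin\Theta=\abs{P-Q}$ by \eqref{eq:sinTheta} and $\tnorm{\,\abs{P-Q}\,}=\tnorm{P-Q}$ by \eqref{eq:snNormPolar}, so $\tnorm{\sin\Theta}=\tnorm{P-Q}\le\frac{\pi}{2}\,\tnorm{V}/d$, as claimed.

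The step I expect to be the main obstacle is the careful justification of the weak integral representation of $X$ on all of $\cH$: one must verify that Theorem~\ref{thm:Sylvester} genuinely applies to the parts $A_0,\Lambda_1$ (resp.\ $A_1,\Lambda_0$), that $PQ^\perp$ (resp.\ $P^\perp Q$) is the \emph{strong} solution of the corresponding Sylvester equation rather than merely a formal one, and that the resulting formula extends to the components lying in the complementary reducing subspaces, where both sides of the identity vanish. Once the representation of $X$ is in hand, pushing the Ky Fan norm through the integral via \eqref{eq:KyFanSup} and invoking the dominance theorem is routine.
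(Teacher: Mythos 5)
Your proposal is correct and follows essentially the same route as the paper's own proof: representing $X=P-Q=PQ^\perp-P^\perp Q$ via strong solutions of the two Sylvester equations, passing to the weak integral formula, estimating Ky Fan norms using \eqref{eq:KyFanSup} and unitary invariance, invoking Ky Fan's dominance theorem, and finishing with the identity $(P-P^\perp)V-V(Q-Q^\perp)=2T$ to get $\tnorm{T}\le\tnorm{V}$. The only cosmetic addition is your explicit remark that $T\in\fS$ by the ideal property, which the paper leaves implicit but which is needed to apply the dominance theorem.
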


In a similar way, one can also prove a corresponding extension of Proposition \ref{prop:sinTheta0}, but we do not need this here.

%%%%%%%%%%%%%%%%%%%%%%%%%%%%%%%%%%%%%%%%%%%%%%%%%%%%%%%%%%%%%%%%%%%%%%%%%%%%%%%%%%%%%%%%%%%%%%%%%%%%%%%%%%%%%%%%%%%%%%%%%%%%%%%%%%%
%%% sin2\Theta theorem for symmetrically-normed ideals
%%%%%%%%%%%%%%%%%%%%%%%%%%%%%%%%%%%%%%%%%%%%%%%%%%%%%%%%%%%%%%%%%%%%%%%%%%%%%%%%%%%%%%%%%%%%%%%%%%%%%%%%%%%%%%%%%%%%%%%%%%%%%%%%%%%
Using Proposition \ref{prop:symmSinThetasnIdeals} instead of Proposition \ref{prop:symmSinTheta} in the proof of Theorem
\ref{thm:intro}, we obtain the following extension of the $\sin2\Theta$ theorem to symmetrically-normed ideals.

\begin{theorem}[The generic $\sin2\Theta$ theorem for symmetrically-normed ideals]\label{thm:sin2ThetasnIdeals}
 Let $A$, $V$, and $Q$ be as in Theorem \ref{thm:intro}. If $V\in\fS$ for some symmetrically-normed ideal $\fS\subset\cL(\cH)$ with
 norm $\tnorm{\cdot}$ such that $(\fS,\tnorm{\cdot})$ admits Ky Fan's dominance theorem, then the operator angle
 $\Theta=\Theta(\EE_A(\sigma),Q)$ satisfies $\sin2\Theta\in\fS$ and
 \[
  \tnorm{\sin2\Theta} \le \frac{\pi}{2}\cdot 2\,\frac{\tnorm{V}}{d}\,.
 \]
\end{theorem}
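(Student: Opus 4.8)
The plan is to mimic the proof of Theorem~\ref{thm:intro} verbatim, replacing the scalar bound norm by the symmetric norm $\tnorm{\cdot}$ and Proposition~\ref{prop:symmSinTheta} by its symmetrically-normed counterpart, Proposition~\ref{prop:symmSinThetasnIdeals}. First I would set $K:=Q-Q^\perp$, a self-adjoint unitary, and define $D:=KAK$ on $\Dom(D):=\Dom(A)$ exactly as in the proof of Theorem~\ref{thm:intro}. Since $\Ran Q$ is reducing for $A+V$, the splitting property \eqref{eq:domSplitting} gives $K(A+V)K=A+V$, hence $D=A+V-KVK$ is self-adjoint with $\spec(D)=\spec(A)=\sigma\cup\Sigma$ and $\EE_D(\sigma)=K\EE_A(\sigma)K$. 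The operator $D$ is thus viewed as the perturbation of $A$ by the self-adjoint operator $V-KVK$, which lies in $\fS$ since $\fS$ is an ideal (so $KVK\in\fS$) and $\tnorm{V-KVK}\le\tnorm{V}+\tnorm{KVK}=2\tnorm{V}$ by unitary invariance.

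Next I would apply Proposition~\ref{prop:symmSinThetasnIdeals} with the roles: the unperturbed operator is $A$ with reducing projection $P:=\EE_A(\sigma)$ (so its parts $A_0,A_1$ are the parts of $A$ on $\Ran P,\Ran P^\perp$, with spectra $\sigma,\Sigma$), and the perturbed operator is $D$ with reducing projection $\EE_D(\sigma)$ (so its parts $\Lambda_0,\Lambda_1$ have spectra $\sigma,\Sigma$ as well, since $\spec(D)=\sigma\cup\Sigma$ and the spectral projection is used). The spectral separation hypothesis \eqref{eq:symmSinThetaSpecSep} reads $\dist(\sigma,\Sigma)\ge d$ and $\dist(\Sigma,\sigma)\ge d$, which holds with equality by assumption since $d=\dist(\sigma,\Sigma)$. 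Proposition~\ref{prop:symmSinThetasnIdeals} then yields $\sin\bigl(\Theta(\EE_A(\sigma),\EE_D(\sigma))\bigr)\in\fS$ and
\[
 \tnorm{\sin\bigl(\Theta(\EE_A(\sigma),\EE_D(\sigma))\bigr)} \le \frac{\pi}{2}\,\frac{\tnorm{V-KVK}}{d} \le \frac{\pi}{2}\cdot 2\,\frac{\tnorm{V}}{d}\,.
\]

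Finally, I would invoke Lemma~\ref{lem:sin2ThetasinTheta} with $P=\EE_A(\sigma)$ and the same $K$: it gives $\sin\bigl(2\Theta(\EE_A(\sigma),Q)\bigr)=\sin\bigl(\Theta(\EE_A(\sigma),K\EE_A(\sigma)K)\bigr)$, and since $K\EE_A(\sigma)K=\EE_D(\sigma)$ by \eqref{eq:specProj}, the left-hand side equals $\sin\bigl(\Theta(\EE_A(\sigma),\EE_D(\sigma))\bigr)$. Combining with the displayed bound gives $\sin2\Theta\in\fS$ and $\tnorm{\sin2\Theta}\le\frac{\pi}{2}\cdot2\,\frac{\tnorm{V}}{d}$, as claimed. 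I do not anticipate a genuine obstacle here: the argument is the literal transcription of the proof of Theorem~\ref{thm:intro}, and the only points needing a word of care are that $\fS$ being an ideal is what puts $V-KVK$ (and ultimately $\sin2\Theta$, via $\abs{P-Q}$) into $\fS$, that Lemma~\ref{lem:sin2ThetasinTheta} is a purely algebraic identity of projections unaffected by the choice of norm, and that Proposition~\ref{prop:symmSinThetasnIdeals} is applicable precisely because $\EE_D(\sigma)$ is a spectral — hence reducing — projection for $D$.
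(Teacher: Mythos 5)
Your proposal is correct and is exactly the argument the paper intends: it states Theorem \ref{thm:sin2ThetasnIdeals} as obtained by rerunning the proof of Theorem \ref{thm:intro} with Proposition \ref{prop:symmSinThetasnIdeals} in place of Proposition \ref{prop:symmSinTheta}, which is what you do, including the correct supporting points ($V-KVK\in\fS$ with $\tnorm{V-KVK}\le2\tnorm{V}$ by the ideal property and unitary invariance, $\EE_D(\sigma)=K\EE_A(\sigma)K$ reducing for $D$, and Lemma \ref{lem:sin2ThetasinTheta} as a norm-independent identity). No gaps.
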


%%%%%%%%%%%%%%%%%%%%%%%%%%%%%%%%%%%%%%%%%%%%%%%%%%%%%%%%%%%%%%%%%%%%%%%%%%%%%%%%%%%%%%%%%%%%%%%%%%%%%%%%%%%%%%%%%%%%%%%%%%%%%%%%%%%
%%% Concluding remark
%%%%%%%%%%%%%%%%%%%%%%%%%%%%%%%%%%%%%%%%%%%%%%%%%%%%%%%%%%%%%%%%%%%%%%%%%%%%%%%%%%%%%%%%%%%%%%%%%%%%%%%%%%%%%%%%%%%%%%%%%%%%%%%%%%%
We close this work with a concluding observation that addresses the case of normal operators $A$.

\begin{remark}\label{rem:sin2ThetasnIdeals}
 A statement analogous to Proposition \ref{prop:symmSinThetasnIdeals} holds if the operator $A$ is assumed to be only normal and
 $V$ is just bounded such that $A+V$ is normal. In this situation, the constant $\frac{\pi}{2}$ in the resulting estimate has to be
 replaced by some suitable constant less than $2{.}91$, see Remark \ref{rem:Sylvester}. Consequently, Theorem
 \ref{thm:sin2ThetasnIdeals} holds with the same modification if $A$ is just normal and $V$ is only bounded. Indeed, in this case,
 the operator $D$ in \eqref{eq:defD} is normal as well.

 Note that either of these constants is universal in the sense that it does not depend on $A$, $V$, or $(\fS,\tnorm{\cdot})$.
 However, in some special cases a better constant is available: If, for example, the involved pairs of spectral sets satisfy one of
 the additional assumptions mentioned in Remark \ref{rem:Sylvester}, or if one restricts the considerations to the ideal of
 Hilbert-Schmidt operators with its usual norm, then the constant in the corresponding estimates can be replaced by $1$,
 cf.\ \cite[Theorem 6.1]{BDM83}. As in Remark \ref{rem:Sylvester}, other improvements on the constant may be available for small
 dimensions of the underlying Hilbert space.
\end{remark}

%%%%%%%%%%%%%%%%%%%%%%%%%%%%%%%%%%%%%%%%%%%%%%%%%%%%%%%%%%%%%%%%%%%%%%%%%%%%%%%%%%%%%%%%%%%%%%%%%%%%%%%%%%%%%%%%%%%%%%%%%%%%%%%%%%%
%%% Acknowledgements
%%%%%%%%%%%%%%%%%%%%%%%%%%%%%%%%%%%%%%%%%%%%%%%%%%%%%%%%%%%%%%%%%%%%%%%%%%%%%%%%%%%%%%%%%%%%%%%%%%%%%%%%%%%%%%%%%%%%%%%%%%%%%%%%%%%
\section*{Acknowledgements.}
The author is indebted to his Ph.D. advisor Vadim Kostrykin for introducing him to this field of research and fruitful discussions.
He would also like to express his gratitude to Konstantin Makarov and Julian Gro{\ss}mann for useful remarks.

%%%%%%%%%%%%%%%%%%%%%%%%%%%%%%%%%%%%%%%%%%%%%%%%%%%%%%%%%%%%%%%%%%%%%%%%%%%%%%%%%%%%%%%%%%%%%%%%%%%%%%%%%%%%%%%%%%%%%%%%%%%%%%%%%%%
%%%%%%%%%%%%%%%%%%%%%%%%%%%%%%%%%%%%%%%%%%%%%%%%%%%%%%%%%%%%%%%%%%%%%%%%%%%%%%%%%%%%%%%%%%%%%%%%%%%%%%%%%%%%%%%%%%%%%%%%%%%%%%%%%%%
%%% Bibliography
%%%%%%%%%%%%%%%%%%%%%%%%%%%%%%%%%%%%%%%%%%%%%%%%%%%%%%%%%%%%%%%%%%%%%%%%%%%%%%%%%%%%%%%%%%%%%%%%%%%%%%%%%%%%%%%%%%%%%%%%%%%%%%%%%%%
%%%%%%%%%%%%%%%%%%%%%%%%%%%%%%%%%%%%%%%%%%%%%%%%%%%%%%%%%%%%%%%%%%%%%%%%%%%%%%%%%%%%%%%%%%%%%%%%%%%%%%%%%%%%%%%%%%%%%%%%%%%%%%%%%%%

\end{document}